\newcommand{\triple}{{\vert\kern-0.25ex\vert\kern-0.25ex\vert}}
\newtheorem{theorem}{Theorem}
\newtheorem{axiom}[theorem]{Axiom}
\newtheorem{conjecture}[theorem]{Conjecture}
\newtheorem{corollary}[theorem]{Corollary}
\newtheorem{definition}[theorem]{Definition}
\newtheorem{example}[theorem]{Example}
\newtheorem{exercise}[theorem]{Exercise}
\newtheorem{lemma}[theorem]{Lemma}
\newtheorem{proposition}[theorem]{Proposition}
\newtheorem{remark}[theorem]{Remark}
\let\pdfoutput=\undefined\fi
\chardef\@x10\chardef\@xv60
\def\tcitime{
\def\@time{%
  \@minute\time\@hour\@minute\divide\@hour\@xv
  \ifnum\@hour<\@x 0\fi\the\@hour:%
  \multiply\@hour\@xv\advance\@minute-\@hour
  \ifnum\@minute<\@x 0\fi\the\@minute
  }}%
\def\x@hyperref#1#2#3{%
   \catcode`\~ = 12
   \catcode`\$ = 12
   \catcode`\_ = 12
   \catcode`\# = 12
   \catcode`\& = 12
   \y@hyperref{#1}{#2}{#3}%
}
\def\y@hyperref#1#2#3#4{%
   #2\ref{#4}#3
   \catcode`\~ = 13
   \catcode`\$ = 3
   \catcode`\_ = 8
   \catcode`\# = 6
   \catcode`\& = 4
}
\def\QCTOpt[#1]#2{%
  \def\QCTOptB{#1}
  \def\QCTOptA{#2}
}
\def\QCTNOpt#1{%
  \def\QCTOptA{#1}
  \let\QCTOptB\empty
}
\def\Qct{%
  \@ifnextchar[{%
    \QCTOpt}{\QCTNOpt}
}
\def\QCBOpt[#1]#2{%
  \def\QCBOptB{#1}%
  \def\QCBOptA{#2}%
}
\def\QCBNOpt#1{%
  \def\QCBOptA{#1}%
  \let\QCBOptB\empty
}
\def\Qcb{%
  \@ifnextchar[{%
    \QCBOpt}{\QCBNOpt}%
}
\def\PrepCapArgs{%
  \ifx\QCBOptA\empty
    \ifx\QCTOptA\empty
      {}%
    \else
      \ifx\QCTOptB\empty
        {\QCTOptA}%
      \else
        [\QCTOptB]{\QCTOptA}%
      \fi
    \fi
  \else
    \ifx\QCBOptA\empty
      {}%
    \else
      \ifx\QCBOptB\empty
        {\QCBOptA}%
      \else
        [\QCBOptB]{\QCBOptA}%
      \fi
    \fi
  \fi
}
\def\GRAPHICSPS#1{%
 \ifcase\GRAPHICSTYPE
   \special{ps: #1}%
 \or
   \special{language "PS", include "#1"}%
 \fi
}%
\def\graffile#1#2#3#4{%
    \bgroup
	   \@inlabelfalse
       \leavevmode
       \@ifundefined{bbl@deactivate}{\def~{\string~}}{\activesoff}%
        \raise -#4 \BOXTHEFRAME{%
           \hbox to #2{\raise #3\hbox to #2{\null #1\hfil}}}%
    \egroup
}%
\def\draftbox#1#2#3#4{%
 \leavevmode\raise -#4 \hbox{%
  \frame{\rlap{\protect\tiny #1}\hbox to #2%
   {\vrule height#3 width\z@ depth\z@\hfil}%
  }%
 }%
}%
\let\nographics=\@msidraft
\newif\ifwasdraft
\def\GRAPHIC#1#2#3#4#5{%
   \ifnum\@msidraft=\@ne\draftbox{#2}{#3}{#4}{#5}%
   \else\graffile{#1}{#3}{#4}{#5}%
   \fi
}
\def\addtoLaTeXparams#1{%
    \edef\LaTeXparams{\LaTeXparams #1}}%
\newif\ifBoxFrame \BoxFramefalse
\newif\ifOverFrame \OverFramefalse
\newif\ifUnderFrame \UnderFramefalse
\def\BOXTHEFRAME#1{%
   \hbox{%
      \ifBoxFrame
         \frame{#1}%
      \else
         {#1}%
      \fi
   }%
}
\def\doFRAMEparams#1{\BoxFramefalse\OverFramefalse\UnderFramefalse\readFRAMEparams#1\end}%
\def\readFRAMEparams#1{%
 \ifx#1\end%
  \let\next=\relax
  \else
  \ifx#1i\dispkind=\z@\fi
  \ifx#1d\dispkind=\@ne\fi
  \ifx#1f\dispkind=\tw@\fi
  \ifx#1t\addtoLaTeXparams{t}\fi
  \ifx#1b\addtoLaTeXparams{b}\fi
  \ifx#1p\addtoLaTeXparams{p}\fi
  \ifx#1h\addtoLaTeXparams{h}\fi
  \ifx#1X\BoxFrametrue\fi
  \ifx#1O\OverFrametrue\fi
  \ifx#1U\UnderFrametrue\fi
  \ifx#1w
    \ifnum\@msidraft=1\wasdrafttrue\else\wasdraftfalse\fi
    \@msidraft=\@ne
  \fi
  \let\next=\readFRAMEparams
  \fi
 \next
 }%
\def\IFRAME#1#2#3#4#5#6{%
      \bgroup
      \let\QCTOptA\empty
      \let\QCTOptB\empty
      \let\QCBOptA\empty
      \let\QCBOptB\empty
      #6%
      \parindent=0pt
      \leftskip=0pt
      \rightskip=0pt
      \setbox0=\hbox{\QCBOptA}%
      \@tempdima=#1\relax
      \ifOverFrame
          \typeout{This is not implemented yet}%
          \show\HELP
      \else
         \ifdim\wd0>\@tempdima
            \advance\@tempdima by \@tempdima
            \ifdim\wd0 >\@tempdima
               \setbox1 =\vbox{%
                  \unskip\hbox to \@tempdima{\hfill\GRAPHIC{#5}{#4}{#1}{#2}{#3}\hfill}%
                  \unskip\hbox to \@tempdima{\parbox[b]{\@tempdima}{\QCBOptA}}%
               }%
               \wd1=\@tempdima
            \else
               \textwidth=\wd0
               \setbox1 =\vbox{%
                 \noindent\hbox to \wd0{\hfill\GRAPHIC{#5}{#4}{#1}{#2}{#3}\hfill}\\%
                 \noindent\hbox{\QCBOptA}%
               }%
               \wd1=\wd0
            \fi
         \else
            \ifdim\wd0>0pt
              \hsize=\@tempdima
              \setbox1=\vbox{%
                \unskip\GRAPHIC{#5}{#4}{#1}{#2}{0pt}%
                \break
                \unskip\hbox to \@tempdima{\hfill \QCBOptA\hfill}%
              }%
              \wd1=\@tempdima
           \else
              \hsize=\@tempdima
              \setbox1=\vbox{%
                \unskip\GRAPHIC{#5}{#4}{#1}{#2}{0pt}%
              }%
              \wd1=\@tempdima
           \fi
         \fi
         \@tempdimb=\ht1
         \advance\@tempdimb by -#2
         \advance\@tempdimb by #3
         \leavevmode
         \raise -\@tempdimb \hbox{\box1}%
      \fi
      \egroup%
}%
\def\DFRAME#1#2#3#4#5{%
  \vspace\topsep
  \hfil\break
  \bgroup
     \leftskip\@flushglue
	 \rightskip\@flushglue
	 \parindent\z@
	 \parfillskip\z@skip
     \let\QCTOptA\empty
     \let\QCTOptB\empty
     \let\QCBOptA\empty
     \let\QCBOptB\empty
	 \vbox\bgroup
        \ifOverFrame 
           #5\QCTOptA\par
        \fi
        \GRAPHIC{#4}{#3}{#1}{#2}{\z@}%
        \ifUnderFrame 
           \break#5\QCBOptA
        \fi
	 \egroup
  \egroup
  \vspace\topsep
  \break
}%
\def\FFRAME#1#2#3#4#5#6#7{%
  \@ifundefined{floatstyle}
    {
     \begin{figure}[#1]%
    }
    {
	 \ifx#1h
      \begin{figure}[H]%
	 \else
      \begin{figure}[#1]%
	 \fi
	}
  \let\QCTOptA\empty
  \let\QCTOptB\empty
  \let\QCBOptA\empty
  \let\QCBOptB\empty
  \ifOverFrame
    #4
    \ifx\QCTOptA\empty
    \else
      \ifx\QCTOptB\empty
        \caption{\QCTOptA}%
      \else
        \caption[\QCTOptB]{\QCTOptA}%
      \fi
    \fi
    \ifUnderFrame\else
      \label{#5}%
    \fi
  \else
    \UnderFrametrue%
  \fi
  \begin{center}\GRAPHIC{#7}{#6}{#2}{#3}{\z@}\end{center}%
  \ifUnderFrame
    #4
    \ifx\QCBOptA\empty
      \caption{}%
    \else
      \ifx\QCBOptB\empty
        \caption{\QCBOptA}%
      \else
        \caption[\QCBOptB]{\QCBOptA}%
      \fi
    \fi
    \label{#5}%
  \fi
  \end{figure}%
 }%
\def\makeactives{
  \catcode`\"=\active
  \catcode`\;=\active
  \catcode`\:=\active
  \catcode`\'=\active
  \catcode`\~=\active
}
   \gdef\activesoff{%
      \def"{\string"}%
      \def;{\string;}%
      \def:{\string:}%
      \def'{\string'}%
      \def~{\string~}%
    }
\def\FRAME#1#2#3#4#5#6#7#8{%
 \bgroup
 \ifnum\@msidraft=\@ne
   \wasdrafttrue
 \else
   \wasdraftfalse%
 \fi
 \def\LaTeXparams{}%
 \dispkind=\z@
 \def\LaTeXparams{}%
 \doFRAMEparams{#1}%
 \ifnum\dispkind=\z@\IFRAME{#2}{#3}{#4}{#7}{#8}{#5}\else
  \ifnum\dispkind=\@ne\DFRAME{#2}{#3}{#7}{#8}{#5}\else
   \ifnum\dispkind=\tw@
    \edef\@tempa{\noexpand\FFRAME{\LaTeXparams}}%
    \@tempa{#2}{#3}{#5}{#6}{#7}{#8}%
    \fi
   \fi
  \fi
  \ifwasdraft\@msidraft=1\else\@msidraft=0\fi{}%
  \egroup
 }%
\def\TEXUX#1{"texux"}
\long\def\QQQ#1#2{%
     \long\expandafter\def\csname#1\endcsname{#2}}%
\long\def\QQA#1#2{}%
\def\QTR#1#2{{\csname#1\endcsname {#2}}}%
\def\EXPAND#1[#2]#3{}%
\def\NOEXPAND#1[#2]#3{}%
\def\LaTeXparent#1{}%
\def\ChildStyles#1{}%
\def\ChildDefaults#1{}%
\def\QTagDef#1#2#3{}%
  \providecommand{\UNICODE}[2][]{\protect\rule{.1in}{.1in}}
  \providecommand{\U}[1]{\protect\rule{.1in}{.1in}}
\def\QQfnmark#1{\footnotemark}
 \def\abstract{%
  \if@twocolumn
   \section*{Abstract (Not appropriate in this style!)}%
   \else \small 
   \begin{center}{\bf Abstract\vspace{-.5em}\vspace{\z@}}\end{center}%
   \quotation 
   \fi
  }%
   \def\registered{\relax\ifmmode{}\r@gistered
                    \else$\m@th\r@gistered$\fi}%
 \def\r@gistered{^{\ooalign
  {\hfil\raise.07ex\hbox{$\scriptstyle\rm\text{R}$}\hfil\crcr
  \mathhexbox20D}}}}{}%
\newdimen\theight
\def\newfmtname{LaTeX2e}
  \DeclareOldFontCommand{\rm}{\normalfont\rmfamily}{\mathrm}
  \DeclareOldFontCommand{\sf}{\normalfont\sffamily}{\mathsf}
  \DeclareOldFontCommand{\tt}{\normalfont\ttfamily}{\mathtt}
  \DeclareOldFontCommand{\bf}{\normalfont\bfseries}{\mathbf}
  \DeclareOldFontCommand{\it}{\normalfont\itshape}{\mathit}
  \DeclareOldFontCommand{\sl}{\normalfont\slshape}{\@nomath\sl}
  \DeclareOldFontCommand{\sc}{\normalfont\scshape}{\@nomath\sc}
\def\alpha{{\Greekmath 010B}}%
\def\beta{{\Greekmath 010C}}%
\def\gamma{{\Greekmath 010D}}%
\def\delta{{\Greekmath 010E}}%
\def\epsilon{{\Greekmath 010F}}%
\def\zeta{{\Greekmath 0110}}%
\def\eta{{\Greekmath 0111}}%
\def\theta{{\Greekmath 0112}}%
\def\iota{{\Greekmath 0113}}%
\def\kappa{{\Greekmath 0114}}%
\def\lambda{{\Greekmath 0115}}%
\def\mu{{\Greekmath 0116}}%
\def\nu{{\Greekmath 0117}}%
\def\xi{{\Greekmath 0118}}%
\def\pi{{\Greekmath 0119}}%
\def\rho{{\Greekmath 011A}}%
\def\sigma{{\Greekmath 011B}}%
\def\tau{{\Greekmath 011C}}%
\def\upsilon{{\Greekmath 011D}}%
\def\phi{{\Greekmath 011E}}%
\def\chi{{\Greekmath 011F}}%
\def\psi{{\Greekmath 0120}}%
\def\omega{{\Greekmath 0121}}%
\def\varepsilon{{\Greekmath 0122}}%
\def\vartheta{{\Greekmath 0123}}%
\def\varpi{{\Greekmath 0124}}%
\def\varrho{{\Greekmath 0125}}%
\def\varsigma{{\Greekmath 0126}}%
\def\varphi{{\Greekmath 0127}}%
\def\nabla{{\Greekmath 0272}}
\def\FindBoldGroup{%
   {\setbox0=\hbox{$\mathbf{x\global\edef\theboldgroup{\the\mathgroup}}$}}%
}
\def\Greekmath#1#2#3#4{%
    \if@compatibility
        \ifnum\mathgroup=\symbold
           \mathchoice{\mbox{\boldmath$\displaystyle\mathchar"#1#2#3#4$}}%
                      {\mbox{\boldmath$\textstyle\mathchar"#1#2#3#4$}}%
                      {\mbox{\boldmath$\scriptstyle\mathchar"#1#2#3#4$}}%
                      {\mbox{\boldmath$\scriptscriptstyle\mathchar"#1#2#3#4$}}%
        \else
           \mathchar"#1#2#3#4%
        \fi 
    \else 
        \FindBoldGroup
        \ifnum\mathgroup=\theboldgroup 
           \mathchoice{\mbox{\boldmath$\displaystyle\mathchar"#1#2#3#4$}}%
                      {\mbox{\boldmath$\textstyle\mathchar"#1#2#3#4$}}%
                      {\mbox{\boldmath$\scriptstyle\mathchar"#1#2#3#4$}}%
                      {\mbox{\boldmath$\scriptscriptstyle\mathchar"#1#2#3#4$}}%
        \else
           \mathchar"#1#2#3#4%
        \fi     	    
	  \fi}
\newif\ifGreekBold  \GreekBoldfalse
\let\SAVEPBF=\pbf
\def\pbf{\GreekBoldtrue\SAVEPBF}%
  \newcounter{equationnumber}  
  \def\mathletters{%
     \addtocounter{equation}{1}
     \edef\@currentlabel{\theequation}%
     \setcounter{equationnumber}{\c@equation}
     \setcounter{equation}{0}%
     \edef\theequation{\@currentlabel\noexpand\alph{equation}}%
  }
    \def\BibTeX{{\rm B\kern-.05em{\sc i\kern-.025em b}\kern-.08em
                 T\kern-.1667em\lower.7ex\hbox{E}\kern-.125emX}}}{}%
\def\AmS{{\protect\usefont{OMS}{cmsy}{m}{n}%
                A\kern-.1667em\lower.5ex\hbox{M}\kern-.125emS}}}{}%
\def\@@eqncr{\let\@tempa\relax
    \ifcase\@eqcnt \def\@tempa{& & &}\or \def\@tempa{& &}%
      \else \def\@tempa{&}\fi
     \@tempa
     \if@eqnsw
        \iftag@
           \@taggnum
        \else
           \@eqnnum\stepcounter{equation}%
        \fi
     \fi
     \global\tag@false
     \global\@eqnswtrue
     \global\@eqcnt\z@\cr}
\def\TCItag{\@ifnextchar*{\@TCItagstar}{\@TCItag}}
\def\@TCItag#1{%
    \global\tag@true
    \global\def\@taggnum{(#1)}%
    \global\def\@currentlabel{#1}}
\def\@TCItagstar*#1{%
    \global\tag@true
    \global\def\@taggnum{#1}%
    \global\def\@currentlabel{#1}}
\def\dprod{\mathop{\displaystyle \prod }}%
\def\ExitTCILatex{\makeatother }
\if@compatibility\message{amsmath already loaded}\fi\aftergroup\ExitTCILatex}
\if@compatibility\message{amstex already loaded}\fi\aftergroup\ExitTCILatex}
\if@compatibility\message{amsgen already loaded}\fi\aftergroup\ExitTCILatex}
\let\DOTSI\relax
\def\RIfM@{\relax\ifmmode}%
\def\FN@{\futurelet\next}%
\def\iint{\DOTSI\intno@\tw@\FN@\ints@}%
\def\iiint{\DOTSI\intno@\thr@@\FN@\ints@}%
\def\iiiint{\DOTSI\intno@4 \FN@\ints@}%
\def\idotsint{\DOTSI\intno@\z@\FN@\ints@}%
\def\ints@{\findlimits@\ints@@}%
\newif\iflimtoken@
\newif\iflimits@
\def\findlimits@{\limtoken@true\ifx\next\limits\limits@true
 \else\ifx\next\nolimits\limits@false\else
 \limtoken@false\ifx\ilimits@\nolimits\limits@false\else
 \ifinner\limits@false\else\limits@true\fi\fi\fi\fi}%
\def\multint@{\int\ifnum\intno@=\z@\intdots@                          
 \else\intkern@\fi                                                    
 \ifnum\intno@>\tw@\int\intkern@\fi                                   
 \ifnum\intno@>\thr@@\int\intkern@\fi                                 
 \int}
\def\multintlimits@{\intop\ifnum\intno@=\z@\intdots@\else\intkern@\fi
 \ifnum\intno@>\tw@\intop\intkern@\fi
 \ifnum\intno@>\thr@@\intop\intkern@\fi\intop}%
\def\intic@{%
    \mathchoice{\hskip.5em}{\hskip.4em}{\hskip.4em}{\hskip.4em}}%
\def\negintic@{\mathchoice
 {\hskip-.5em}{\hskip-.4em}{\hskip-.4em}{\hskip-.4em}}%
\def\ints@@{\iflimtoken@                                              
 \def\ints@@@{\iflimits@\negintic@
   \mathop{\intic@\multintlimits@}\limits                             
  \else\multint@\nolimits\fi                                          
  \eat@}
 \else                                                                
 \def\ints@@@{\iflimits@\negintic@
  \mathop{\intic@\multintlimits@}\limits\else
  \multint@\nolimits\fi}\fi\ints@@@}%
\def\intkern@{\mathchoice{\!\!\!}{\!\!}{\!\!}{\!\!}}%
\def\plaincdots@{\mathinner{\cdotp\cdotp\cdotp}}%
\def\intdots@{\mathchoice{\plaincdots@}%
 {{\cdotp}\mkern1.5mu{\cdotp}\mkern1.5mu{\cdotp}}%
 {{\cdotp}\mkern1mu{\cdotp}\mkern1mu{\cdotp}}%
 {{\cdotp}\mkern1mu{\cdotp}\mkern1mu{\cdotp}}}%
\def\RIfM@{\relax\protect\ifmmode}
\def\text{\RIfM@\expandafter\text@\else\expandafter\mbox\fi}
\let\nfss@text\text
\def\text@#1{\mathchoice
   {\textdef@\displaystyle\f@size{#1}}%
   {\textdef@\textstyle\tf@size{\firstchoice@false #1}}%
   {\textdef@\textstyle\sf@size{\firstchoice@false #1}}%
   {\textdef@\textstyle \ssf@size{\firstchoice@false #1}}%
   \glb@settings}
\def\textdef@#1#2#3{\hbox{{%
                    \everymath{#1}%
                    \let\f@size#2\selectfont
                    #3}}}
\newif\iffirstchoice@
\def\Let@{\relax\iffalse{\fi\let\\=\cr\iffalse}\fi}%
\def\vspace@{\def\vspace##1{\crcr\noalign{\vskip##1\relax}}}%
\def\multilimits@{\bgroup\vspace@\Let@
 \baselineskip\fontdimen10 \scriptfont\tw@
 \advance\baselineskip\fontdimen12 \scriptfont\tw@
 \lineskip\thr@@\fontdimen8 \scriptfont\thr@@
 \lineskiplimit\lineskip
 \vbox\bgroup\ialign\bgroup\hfil$\m@th\scriptstyle{##}$\hfil\crcr}%
\def\Sb{_\multilimits@}%
\def\endSb{\crcr\egroup\egroup\egroup}%
\def\Sp{^\multilimits@}%
\newdimen\ex@
\def\rightarrowfill@#1{$#1\m@th\mathord-\mkern-6mu\cleaders
 \hbox{$#1\mkern-2mu\mathord-\mkern-2mu$}\hfill
 \mkern-6mu\mathord\rightarrow$}%
\def\leftarrowfill@#1{$#1\m@th\mathord\leftarrow\mkern-6mu\cleaders
 \hbox{$#1\mkern-2mu\mathord-\mkern-2mu$}\hfill\mkern-6mu\mathord-$}%
\def\leftrightarrowfill@#1{$#1\m@th\mathord\leftarrow
\mkern-6mu\cleaders
 \hbox{$#1\mkern-2mu\mathord-\mkern-2mu$}\hfill
 \mkern-6mu\mathord\rightarrow$}%
\def\overrightarrow{\mathpalette\overrightarrow@}%
\def\overrightarrow@#1#2{\vbox{\ialign{##\crcr\rightarrowfill@#1\crcr
 \noalign{\kern-\ex@\nointerlineskip}$\m@th\hfil#1#2\hfil$\crcr}}}%
\def\overleftarrow{\mathpalette\overleftarrow@}%
\def\overleftarrow@#1#2{\vbox{\ialign{##\crcr\leftarrowfill@#1\crcr
 \noalign{\kern-\ex@\nointerlineskip}$\m@th\hfil#1#2\hfil$\crcr}}}%
\def\overleftrightarrow{\mathpalette\overleftrightarrow@}%
\def\overleftrightarrow@#1#2{\vbox{\ialign{##\crcr
   \leftrightarrowfill@#1\crcr
 \noalign{\kern-\ex@\nointerlineskip}$\m@th\hfil#1#2\hfil$\crcr}}}%
\def\underrightarrow{\mathpalette\underrightarrow@}%
\def\underrightarrow@#1#2{\vtop{\ialign{##\crcr$\m@th\hfil#1#2\hfil
  $\crcr\noalign{\nointerlineskip}\rightarrowfill@#1\crcr}}}%
\def\underleftarrow{\mathpalette\underleftarrow@}%
\def\underleftarrow@#1#2{\vtop{\ialign{##\crcr$\m@th\hfil#1#2\hfil
  $\crcr\noalign{\nointerlineskip}\leftarrowfill@#1\crcr}}}%
\def\underleftrightarrow{\mathpalette\underleftrightarrow@}%
\def\underleftrightarrow@#1#2{\vtop{\ialign{##\crcr$\m@th
  \hfil#1#2\hfil$\crcr
 \noalign{\nointerlineskip}\leftrightarrowfill@#1\crcr}}}%
\def\qopnamewl@#1{\mathop{\operator@font#1}\nlimits@}
\let\nlimits@\displaylimits
\def\setboxz@h{\setbox\z@\hbox}
\def\varlim@#1#2{\mathop{\vtop{\ialign{##\crcr
 \hfil$#1\m@th\operator@font lim$\hfil\crcr
 \noalign{\nointerlineskip}#2#1\crcr
 \noalign{\nointerlineskip\kern-\ex@}\crcr}}}}
 \def\rightarrowfill@#1{\m@th\setboxz@h{$#1-$}\ht\z@\z@
  $#1\copy\z@\mkern-6mu\cleaders
  \hbox{$#1\mkern-2mu\box\z@\mkern-2mu$}\hfill
  \mkern-6mu\mathord\rightarrow$}
\def\leftarrowfill@#1{\m@th\setboxz@h{$#1-$}\ht\z@\z@
  $#1\mathord\leftarrow\mkern-6mu\cleaders
  \hbox{$#1\mkern-2mu\copy\z@\mkern-2mu$}\hfill
  \mkern-6mu\box\z@$}
\def\projlim{\qopnamewl@{proj\,lim}}
\def\injlim{\qopnamewl@{inj\,lim}}
\def\varinjlim{\mathpalette\varlim@\rightarrowfill@}
\def\varprojlim{\mathpalette\varlim@\leftarrowfill@}
\def\varliminf{\mathpalette\varliminf@{}}
\def\varliminf@#1{\mathop{\underline{\vrule\@depth.2\ex@\@width\z@
   \hbox{$#1\m@th\operator@font lim$}}}}
\def\varlimsup{\mathpalette\varlimsup@{}}
\def\varlimsup@#1{\mathop{\overline
  {\hbox{$#1\m@th\operator@font lim$}}}}
\def\align{\@verbatim \frenchspacing\@vobeyspaces \@alignverbatim
You are using the "align" environment in a style in which it is not defined.}
\let\csname endalign*\endcsname =\endtrivlist
\def\alignat{\@verbatim \frenchspacing\@vobeyspaces \@alignatverbatim
You are using the "alignat" environment in a style in which it is not defined.}
\let\csname endalignat*\endcsname =\endtrivlist
\def\xalignat{\@verbatim \frenchspacing\@vobeyspaces \@xalignatverbatim
You are using the "xalignat" environment in a style in which it is not defined.}
\let\csname endxalignat*\endcsname =\endtrivlist
\def\gather{\@verbatim \frenchspacing\@vobeyspaces \@gatherverbatim
You are using the "gather" environment in a style in which it is not defined.}
\let\csname endgather*\endcsname =\endtrivlist
\def\multiline{\@verbatim \frenchspacing\@vobeyspaces \@multilineverbatim
You are using the "multiline" environment in a style in which it is not defined.}
\let\csname endmultiline*\endcsname =\endtrivlist
\def\arrax{\@verbatim \frenchspacing\@vobeyspaces \@arraxverbatim
You are using a type of "array" construct that is only allowed in AmS-LaTeX.}
\def\tabulax{\@verbatim \frenchspacing\@vobeyspaces \@tabulaxverbatim
You are using a type of "tabular" construct that is only allowed in AmS-LaTeX.}
\let\csname endarrax*\endcsname =\endtrivlist
\let\csname endtabulax*\endcsname =\endtrivlist
 \def\endequation{%
     \ifmmode\ifinner 
      \iftag@
        \addtocounter{equation}{-1} 
        $\hfil
           \displaywidth\linewidth\@taggnum\egroup \endtrivlist
        \global\tag@false
        \global\@ignoretrue   
      \else
        $\hfil
           \displaywidth\linewidth\@eqnnum\egroup \endtrivlist
        \global\tag@false
        \global\@ignoretrue 
      \fi
     \else   
      \iftag@
        \addtocounter{equation}{-1} 
        \eqno \hbox{\@taggnum}
        \global\tag@false%
        $$\global\@ignoretrue
      \else
        \eqno \hbox{\@eqnnum}
        $$\global\@ignoretrue
      \fi
     \fi\fi
 } 
 \newif\iftag@ \tag@false
 \def\TCItag{\@ifnextchar*{\@TCItagstar}{\@TCItag}}
 \def\@TCItag#1{%
     \global\tag@true
     \global\def\@taggnum{(#1)}%
     \global\def\@currentlabel{#1}}
 \def\@TCItagstar*#1{%
     \global\tag@true
     \global\def\@taggnum{#1}%
     \global\def\@currentlabel{#1}}
     \def\tag{\@ifnextchar*{\@tagstar}{\@tag}}
     \def\@tag#1{%
         \global\tag@true
         \global\def\@taggnum{(#1)}}
     \def\@tagstar*#1{%
         \global\tag@true
         \global\def\@taggnum{#1}}
\def\binom#1#2{{#1 \choose #2}}%
\begin{document}

\title{On the Analysis of a Singular Stochastic Volterra Differential Equation driven by a Wiener Noise}
\author
{{\bf Emmanuel Coffie\footnote{University of Liverpool, Liverpool, L69 7ZL, U.K; emmanuel.coffie@liverpool.ac.uk}
\quad \bf Olivier Menoukeu-Pamen\footnote{University of Liverpool, Liverpool,L69 7ZL, U.K; o.menoukeu-pamen@liverpool.ac.uk}
 \quad \bf Frank Proske\footnote{University of Oslo, 0316 Oslo, Norway; proske@math.uio.no}}}
\date{}
\maketitle
\begin{abstract}

In this article, we construct unique strong solutions to a  class of stochastic Volterra differential equations driven by a singular drift vector field and a Wiener noise. Further, we examine the Sobolev differentiability of the strong solution with respect to its initial value.

\bigskip

\noindent \emph{keywords}: Singular Volterra SDEs, Brownian motion, Malliavin calculus

\bigskip
\noindent \emph{Mathematics Subject Classification} (2010): 60H10, 49N60, 91G80.
\end{abstract}

\section{Introduction}

Stochastic Volterra differential equations (SDVs) were first studied by Berger and Mizel (\cite{BM80a}, \cite{BM80b}) and have since been successfully applied to modeling the dynamics of various phenomena, including population growth, the spread of epidemics, and (rough) stock price volatility in finance (see, e.g., \cite{AJEE19b},\cite{BFG16}). In contrast to classical stochastic differential equations, SVDEs incorporate time‑dependent kernels which introduce long‑range dependence. These equations are particularly effective in representing dynamical systems that exhibit non-Markovian behavior.
\par
In this article, we study SVDEs with spatially singular and merely measurable drift coefficients, where classical Lipschitz or Hölder assumptions fail. In particular, we study the following stochastic Volterra
differential equation (SVDE)%
\begin{equation}
X_{t}^{x}=x+\int_{0}^{t}b(t,s,X_{s}^{x})ds+B_{t}, \label{SDE0}
\end{equation}
$0\leq t\leq T$, $x\in \mathbb{R}^{d}$, where $b:\left[ 0,T\right] \times \left[ 0,T\right] \times \mathbb{R}^{d}\longrightarrow \mathbb{R}^{d}$ is a Borel-measurable function and $(B_{t})_{t\in [0,T]}$ is a d-dimensional Brownian motion (on a filtered probability space).
Our primary objective is to construct unique strong solutions to the stochastic Volterra differential equation (SVDE) \eqref{SDE0}, where the driving vector field 
$b$ is spatially singular, that is, it is not Lipschitz continuous and may have discontinuities with respect to the variable $x$ (and $s$). For this analysis, it is assumed that the driving vector field $b$ in \eqref{SDE0} admits a Volterra‑type expansion:
\begin{equation}\label{b}
b(t,s,x)=\sum_{m\geq 0}(t-s)^{m}g_{m}(s,x),  
\end{equation}%
$0\leq s\leq t\leq T$, $x\in \mathbb{R}^{d}$, where $g_{m}\in L^{\infty }(\left[ 0,T\right] \times \mathbb{R}^{d};\mathbb{R%
}^{d})$, such that for all $m\ge 0$ 
\begin{equation}\label{gm}
\big \vert  g_m\big\vert_{\infty}^{1-\epsilon}\le C\frac{m^{(\frac{1}{2}-\epsilon)m}}{m!}
\end{equation}
for some $\epsilon \in (0,\frac{1}{2})$ and some constant $C<\infty$.
\par
In this paper, our objective is to analyze the SVDE \eqref{SDE0} for the class $\mathbb{H}$ of vector fields $b\in L^{\infty }(\left[ 0,T\right] ^{2}\times 
\mathbb{R}^{d};\mathbb{R}^{d})$ of the form (\ref{b}). In fact, we prove
for vector fields $b\in \mathbb{H}$ that the SVDE (\ref{SDE0}) has a unique
strong solution which is Malliavin differentiable and ($\mathbb{P}-$a.e.) locally
Sobolev differentiable with respect to the initial condition (see Theorem %
\ref{Main}). Our approach for the construction of strong solutions is based on techniques of Malliavin calculus which was developed in \cite{MMNPZ} and \cite{MBP} in the context of singular stochastic differential equations with an additive Wiener noise.
\par
We mention that in the case of a non-singular kernel the authors in \cite%
{AJEE19b} and \cite{PS23b} obtain a unique strong solution for $\frac{1}{2}$%
-H\"{o}lder continuous diffusion coefficients on the real line by employing e.g. the theorem of Yamada-Watanabe. On the other hand, the case of singular
kernels (as e.g. of the type $(t-s)^{-\alpha }$) and spatially Lipschitz
continuous vector fields was investigated in \cite{CLP95} and \cite{CD01}.
Furthermore, based on PDE techniques the authors in \cite{MS15} establish
pathwise uniqueness of one-dimensional SVDEs with singular kernels and $\xi $%
-H\"{o}lder continuous diffusion coefficients and drift coefficients not
depending on the solution. See also \cite{PS23} for an extension of the
latter result to the case of time-inhomogeneous coefficients. As for the concept of weak solutions of singular SVDEs we refer to \cite{AJ21}, \cite{MS15} and the references therein.  
\par
While the Malliavin framework is inspired by earlier work in \cite{MMNPZ,MBP}, the Volterra framework
creates new analytic difficulties that are absent in the Markovian SDE case:
\begin{itemize}[label={-}]
  \item the equation is non-Markovian and lacks the flow/semigroup structure that underlies many classical arguments;
  \item the drift involves time-dependent memory via $\int_0^t b(t,s,\cdot)\,ds$, which changes the structure of
  Picard iterations and derivative representations;
  \item new kernel-dependent estimates are required to control iterated integrals and ensure convergence on small
  time intervals;
  \item the proof of strong well-posedness under merely measurable drifts in this non-Markovian setting requires a careful combination of approximation, weak convergence, and identification of adapted limits.
\end{itemize} 
Thus the present paper constitutes a genuine extension rather than a small technical adaptation and therefore:
\begin{itemize}[label={-}]
\item establishes strong existence and uniqueness for non‑Markovian SVDEs with merely measurable drift.
\item proves a regularization by noise phenomenon in the Volterra setting, previously unknown.
\item derives Malliavin differentiability and Sobolev regularity in the initial condition.
\end{itemize}
In other words, we aim to establish strong well-posedness and regularity for a new class of singular stochastic Volterra equations under minimal assumptions. The results demonstrate a robust regularization-by-noise phenomenon in a non-Markovian framework. The Volterra structure fundamentally alters the analysis since classical semimartingale and Markov techniques are no longer applicable.
\par
Finally, let us mention that Volterra equations are a canonical way to model non-Markovian dynamics with memory and that, singular kernels appear naturally in rough and long-memory modelling. See, e.g., \cite{AJEE19b,BFG16} and the references therein. Further, our focus is on a basic, robust regularization-by-noise phenomenon: additive Brownian noise restores well-posedness even when the drift is merely measurable (in the spatial variable). In this case, we choose the kernel class $\mathbb{H}$ to allow a transparent analytic route to strong solutions and regularity with respect to the initial value; it is not meant as a complete coverage of all rough-volatility kernels, but it provides a rigorous first step for singular SVDEs under minimal drift regularity.
\par
The remainder of the paper is organised as follows: We prove that SVDE \eqref{SDE0} has a unique strong solution in Section \ref{global}. In Section \ref{convergence}, we establish the weak convergence of approximating solutions. We then discuss the existence of a unique weak solution in Section \ref{existence1} and provide an example to support the main result.

\section{Strong uniqueness}\label{global}
We are coming to the main result of our paper:
\begin{theorem}
\label{Main}
Suppose that $b\in $ $\mathbb{H}$. Then there exists for a sufficiently small time horizon $T$ a unique strong
solution $X_{t}^{x},0\leq t\leq T$ to the \textup{SVDE} \eqref{SDE0} for all initial
values $x$. Moreover, $X_{t}^{x}$ is Malliavin differentiable and the
associated stochastic flow $X_{t}^{\cdot }$ belongs to the space $%
L^{2}(\Omega ;W_{loc}^{1,2}(\mathbb{R}^{d}))$ for all $0\leq t\leq T$.
\end{theorem}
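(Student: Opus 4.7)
The plan is to follow the Malliavin-calculus compactness scheme developed by Meyer-Brandis-Proske \cite{MBP,MMNPZ} for additive-noise SDEs with singular drift, adapting it so that the series expansion \eqref{b} replaces the usual spatial regularity of the drift. First I would spatially mollify each $g_m$ to obtain smooth, bounded $g_m^n$ with $\sum_{m\ge 0} T^m \|g_m^n\|_\infty$ uniformly controlled, set $b_n(t,s,x)=\sum_{m\ge 0}(t-s)^m g_m^n(s,x)$, and solve the smoothed SVDE
\[
X_t^{n,x}=x+B_t+\int_0^t b_n(t,s,X_s^{n,x})\,ds
\]
by classical Picard iteration on a short horizon, obtaining a family $(X_t^{n,x})$ of Malliavin-differentiable processes whose derivatives satisfy the linear Volterra equation
\[
D_\theta X_t^{n,x}=\mathrm{Id}\,\mathbf{1}_{[0,t]}(\theta)+\int_\theta^t \nabla_x b_n(t,s,X_s^{n,x})\,D_\theta X_s^{n,x}\,ds.
\]

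The heart of the proof is a uniform-in-$n$ bound on $\|X_t^{n,x}\|_{\mathbb{D}^{1,2}}$ despite the fact that $\nabla_x b_n$ may blow up as $n\to\infty$. I would iterate the variational equation into the Neumann-type series
\[
D_\theta X_t^{n,x}=\mathrm{Id}\,\mathbf{1}_{[0,t]}(\theta)+\sum_{k\ge 1}\int_{\theta<s_k<\cdots<s_1<t}\nabla_x b_n(t,s_1,X_{s_1}^{n,x})\cdots\nabla_x b_n(s_{k-1},s_k,X_{s_k}^{n,x})\,ds_1\cdots ds_k,
\]
and estimate each $L^2(\Omega)$-norm by an equivalent-measure change: under a Girsanov density removing the Volterra drift, $X^{n,x}-x$ reduces to a Brownian motion, so each occurrence of $\nabla_x b_n$ can be integrated by parts in the Malliavin sense against iterated Wiener integrals built from $B$. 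The expansion \eqref{b} is essential here because each kernel factor $(s_{j-1}-s_j)^{m_j}$ contributes an additional time weight which, combined with the simplex volume and a shuffle-type Cauchy-Schwarz estimate, produces factorial decay in $k$, while each factor $g_{m_j}^n$ enters only through $\|g_{m_j}\|_\infty$ and sums against this factorial by the hypothesis $\sum_m T^m\|g_m\|_\infty<\infty$. The main obstacle is carrying out these integration-by-parts estimates while bookkeeping the two-time dependence $b(t,s,\cdot)$ inside the iterated-integral structure; shrinking $T$ then forces absolute convergence of the bounding series in $k$.

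Given this uniform bound, relative compactness in $L^2(\Omega;\mathbb{R}^d)$ of Da Prato-Malliavin-Nualart type extracts a subsequential limit $X_t^x$ which inherits $\mathcal{F}^B_t$-adaptedness from the approximations and is therefore a strong solution; passage to the limit in the mollified equation is justified by the uniform $L^\infty$-bound on $b_n$ and the convergence $b_n\to b$ $\mathbb{P}\otimes ds$-a.e.~along the Brownian-like trajectories $X^{n,x}$, combined with occupation-time estimates. Uniqueness among strong solutions follows because any two such solutions produce the same Girsanov representation and the same Malliavin derivatives, hence coincide $\mathbb{P}$-a.s. Finally, local Sobolev differentiability of $x\mapsto X_t^x$ is obtained by replaying the same Malliavin-compactness argument on spatial difference quotients $(X_t^{n,x+h}-X_t^{n,x})/|h|$, producing in the limit a weak derivative in $L^2(\Omega;W_{loc}^{1,2}(\mathbb{R}^{d}))$ for each $0\le t\le T$.
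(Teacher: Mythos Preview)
Your overall scheme matches the paper's: mollify each $g_m$, derive uniform $\mathbb{D}^{1,2}$-bounds on the approximations via the Neumann series for $D_\theta X^{n,x}$, Girsanov to shift to Brownian motion, and iterated integration-by-parts (the paper's Proposition~\ref{prop1}), then invoke the Da Prato--Malliavin--Nualart compactness criterion. One technical device you leave implicit: to feed the two-time kernel into Proposition~\ref{prop1}, the paper does not treat $(s_{j-1}-s_j)^{m_j}$ as abstract weights but applies Cauchy's formula for repeated integration to each factor $(t_j-t_{j-1})^\mu$, enlarging the simplex so that every integrand becomes a product of single-time terms $Db_\mu(s,B_s^x)$; only then does the $\Gamma$-function denominator appear.

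The more substantive divergence is in how the limit is identified as a solution. You propose to pass to the limit directly in the mollified equation via occupation-time estimates, but with $b$ merely bounded measurable and only $L^2$-convergence of $X^{n,x}$ available, justifying $\int_0^t b_n(t,s,X_s^{n,x})\,ds\to\int_0^t b(t,s,Y_s^x)\,ds$ is genuinely delicate and your sketch does not close it. The paper sidesteps this entirely: it constructs a weak solution $X^x$ separately via Girsanov, proves a weak-convergence lemma (Lemma~\ref{WeakConvergence}) showing $\varphi(X_t^{n,x})\to E[\varphi(X_t^x)\mid\mathcal{F}_t]$ weakly for bounded continuous $\varphi$, matches this against the compactness limit $Y_t^x$ to get $Y_t^x=E[X_t^x\mid\mathcal{F}_t]$, and then takes $\varphi(x)=x^2$ to force $X_t^x=E[X_t^x\mid\mathcal{F}_t]$, i.e.\ adaptedness of the a priori weak solution. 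Uniqueness is argued via the $S$-transform as in \cite{MMNPZ}, and Sobolev regularity comes from bounding $\frac{d}{dx}X_t^{n,x}$ by the same Picard-plus-Proposition~\ref{prop1} argument rather than through difference quotients.
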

We also need the following auxiliary result for the proof of the main result:
\begin{lemma}
\label{Compactness}
\bigskip Let $b\in \mathbb{H}$ given by
\begin{equation*}
b(t,s,x)=\sum_{\mu \geq 0}(t-s)^{\mu }\cdot b_{\mu }(s,x)
\end{equation*}
for $b_{\mu }\in C_{c}^{\infty}((0,T)\times \mathbb{R}^{d};\mathbb{R}^{d})$, $\mu \geq 0,$ such that 
\begin{equation*}
\sup_{\mu \geq 0}\vert\vert D b_{\mu }\vert\vert _{\infty }<\infty,
\end{equation*}%
where $D$ is the spatial derivative of $ b_{\mu }$. Let $X_{\cdot }^{x}$ be the unique strong solution associated with the \textup{SVDE} \eqref{SDE0}. Denote by $D_{u}$ the Malliavin derivative with respect to the
Brownian motion $B_{\cdot }$. Then for a sufficiently small $T$
\begin{equation*}
\sup_{0\leq u\leq T}\big\vert\big\vert D_{u}X_{t}^{x}\big\vert\big\vert_{L^{2}(\Omega )}\leq
C_{d,T}(\sup_{\mu \geq 0}\vert\vert  b_{\mu }\vert\vert_{\infty })
\end{equation*}
as well as there exists  $\alpha =\alpha (s)>0$ such that
\begin{equation*}
\big\vert\big\vert D_{r}X_{s}^{x}-D_{v}X_{s}^{x}\big\vert\big\vert_{L^{2}(\Omega )}\leq C_{d,T}(\sup_{\mu
\geq 0}\left\Vert b_{\mu }\right\Vert _{\infty })\left\vert r-v\right\vert
^{\alpha }
\end{equation*}%
for $0\leq v\leq r\leq T$, where $C_{d,T}:\left[
0,\infty \right) \longrightarrow $ $\left[ 0,\infty \right) $ is an
increasing, continuous function, $\left\Vert \cdot \right\Vert $ a matrix
norm and $\left\Vert \cdot \right\Vert _{\infty }$ the supremum norm.
\end{lemma}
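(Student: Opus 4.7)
The plan is to expand the Malliavin derivative $D_u X_t^x$ into a Picard/Peano--Baker series and control each term by Proposition~\ref{prop1}. After mollifying the $b_\mu$ to sit in $C_c^\infty$ (with a passage to the limit at the end) we may differentiate \eqref{SDE0} in the Malliavin sense, obtaining the linear Volterra equation
\begin{equation*}
D_u X_t^x \;=\; I_d \;+\; \int_u^t \partial_x b(t,s,X_s^x)\, D_u X_s^x\, ds \qquad (u\leq t),
\end{equation*}
whose iteration, combined with the expansion $\partial_x b(t,s,x) = \sum_{\mu\geq 0}(t-s)^{\mu}\,\partial_x b_{\mu}(s,x)$, gives
\begin{equation*}
D_u X_t^x \;=\; I_d \;+\; \sum_{n\geq 1}\sum_{\mu_1,\ldots,\mu_n\geq 0}\int_{u<s_n<\cdots<s_1<t}\prod_{i=1}^n (s_{i-1}-s_i)^{\mu_i}\,\partial_x b_{\mu_i}(s_i,X_{s_i}^x)\,ds_1\cdots ds_n,
\end{equation*}
with the convention $s_0:=t$.

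The core step is to bound the $L^2(\Omega)$-norm of each iterated integral. Since $b$ is bounded, the process $X_s^x - x - B_s = \int_0^s b(s,r,X_r^x)\,dr$ is uniformly bounded, so a change-of-measure argument (made rigorous via the weak-solution construction of Sections~\ref{convergence}--\ref{existence}) allows one to replace $X_{s_i}^x$ by $x + B_{s_i}$ inside the expectation at the cost of a density with bounded $L^p$-moments. Proposition~\ref{prop1}, applied with $f_i = b_{\mu_i}$ and unit multi-indices encoding the single spatial derivatives, then produces the estimate
\begin{equation*}
\Bigl\|\int_{u<s_n<\cdots<s_1<t}\prod_{i=1}^n(s_{i-1}-s_i)^{\mu_i}\,\partial_x b_{\mu_i}(s_i,X_{s_i}^x)\,ds\Bigr\|_{L^2(\Omega)} \;\leq\; \frac{C_d^n\, T^{\sum_i \mu_i}\,\prod_i \|b_{\mu_i}\|_\infty\,(t-u)^{n/2}}{\sqrt{\Gamma(n/2+1)}}.
\end{equation*}
Summing first over $\vec\mu$ (each factor $T^{\mu_i}\|b_{\mu_i}\|_\infty$ contributing a geometric term because $\sum_\mu T^\mu\|b_\mu\|_\infty<\infty$ as in the class $\mathbb{H}$) and then over $n$ (using the factorial decay of $\Gamma(n/2+1)^{-1/2}$) yields $\sup_{0\leq u\leq T}\|D_u X_t^x\|_{L^2(\Omega)}\leq C_{d,T}(\sup_{\mu\geq 0}\|b_\mu\|_\infty)$, proving the first estimate.

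For the Hölder bound, set $\Delta_\sigma := D_r X_\sigma^x - D_v X_\sigma^x$ for $v\leq r\leq \sigma$. Subtracting the two Malliavin-derivative equations gives
\begin{equation*}
\Delta_\sigma \;=\; -\int_v^r \partial_x b(\sigma,\tau,X_\tau^x)\, D_v X_\tau^x\, d\tau \;+\; \int_r^\sigma \partial_x b(\sigma,\tau,X_\tau^x)\,\Delta_\tau\, d\tau,
\end{equation*}
a linear Volterra equation in $\Delta$ whose forcing term is supported on the short interval $[v,r]$. Iterating as in the previous step and estimating both the boundary integral of length $|r-v|$ and the remaining factors by the same Proposition~\ref{prop1} argument produces $\|D_r X_s^x - D_v X_s^x\|_{L^2}\leq C_{d,T}(\sup_\mu\|b_\mu\|_\infty)\,|r-v|^\alpha$ for some $\alpha=\alpha(s)>0$ inherited from the fractional exponent $n/2$. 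The main obstacle throughout is the change-of-measure reduction $X_{s_i}^x\rightsquigarrow x+B_{s_i}$: because of the genuine two-time $t$-dependence in the Volterra kernel, ordinary Girsanov is not directly available, and the substitution must be justified through the weak-solution framework together with the uniform bound on $X_s^x - x - B_s$; once this is done, the $\Gamma$-factorial gain in Proposition~\ref{prop1} takes care of the summation.
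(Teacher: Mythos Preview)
Your outline follows the paper's overall strategy---Picard expansion of the Malliavin derivative, a change of measure to replace $X_s^x$ by $x+B_s$, and then Proposition~\ref{prop1} for the iterated integrals---but there is a genuine gap in the change-of-measure step, and you miss the technical device the paper uses to close it. You correctly flag that ``ordinary Girsanov is not directly available'' because of the two-time kernel, but your proposed fix, invoking the weak-solution construction of Sections~\ref{convergence}--\ref{existence}, is circular: those sections rely on Lemma~\ref{Compactness} (through the compactness criterion of Corollary~\ref{MC}) to produce the strong solution in the first place. The paper's actual resolution is the Cauchy formula for repeated integration: writing each $(t-s)^{\mu}$ as an iterated integral, one recasts the SVDE as $X_t^x=x+\int_0^t C_{s}(X_{\cdot}^x)\,ds+B_t$ with an \emph{adapted} path-dependent drift $C_s$, and ordinary Girsanov against this drift is then available, yielding the passage from $L^2$ to $L^8$ with $x+B_{\cdot}$ in place of $X_{\cdot}^x$.

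The same Cauchy formula is what the paper uses for the polynomial weights $(s_{i-1}-s_i)^{\mu_i}$ inside the Picard iterates. Rather than bounding them by $T^{\mu_i}$ as you do, the paper expands each weight into $\mu_i$ additional time-integrals, so that the $L^8$ version of Proposition~\ref{prop1} (from \cite{MMNPZ}) applies to a pure iterated integral of length of order $n\mu$ and delivers the factorial gain $\Gamma(4n(\mu-1))^{-1/8}$ needed for the double sum to converge. Your crude bound has two problems: first, the hypothesis of the lemma is only $\sup_\mu\|b_\mu\|_\infty<\infty$, not the summability $\sum_\mu T^\mu\|b_\mu\|_\infty<\infty$ you invoke; second, Proposition~\ref{prop1} controls an \emph{expectation}, not an $L^2$-norm, and the integration-by-parts underlying it does not commute with taking absolute values of the integrand---so pulling the deterministic factor $T^{\sum\mu_i}$ outside and then ``applying Proposition~\ref{prop1}'' to the remainder to obtain your displayed bound with $\sqrt{\Gamma(n/2+1)}$ is not justified as written.
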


\begin{proof}
We know by assumption that 
\begin{equation*}
b(t,s,x)=\sum_{\mu \geq 0}(t-s)^{\mu }\cdot b_{\mu }(s,x).
\end{equation*}%
So we can use dominated convergence and obtain that 
\begin{equation*}
Db(t,s,x)=\sum_{\mu \geq 0}(t-s)^{\mu }\cdot Db_{\mu }(s,x)
\end{equation*}%
pointwise.  By applying the Malliavin derivative $D$ to both sides of the SVDE 
\begin{equation*}
X_{t}^{x}=x+\int_{0}^{t}b(t,s,X_{s}^{x})ds+B_{t}\text{,}
\end{equation*}%
we obtain the equation 
\begin{equation}
D_{u}X_{t}^{x}=\int_{u}^{t}Db(t,s,X_{s}^{x})\cdot D_{u}X_{s}^{x}ds+I_{d}
\end{equation}%
fort $u\leq t$ a.e (compare \cite{Nualart}). Here, $Db$ denotes the spatial derivative of the vector
field $b:[0,T]^{2}\times \mathbb{R}^{d}\rightarrow \mathbb{R}^{d}$ and $%
I_{d}\in \mathbb{R}^{d\times d}$ the unit matrix. Using Picard
iteration, we then find 
\begin{equation*}
D_{u}X_{t}^{x}=I_d+\sum_{n\geq 2}\Big\{\int_{u}^{t}\int_{u}^{s_{n-1}}...%
\int_{u}^{s_{2}}Db(t,s_{n-1},X_{s_{n-1}}^{x})\cdot ...\cdot
Db(s_{2},s_{1},X_{s_{1}}^{x})ds_{1}...ds_{n-1}\Big\}
\end{equation*}%
in $L^{p}$, $p\geq 1$. Thus, we derive that 
\begin{align*}
D_{u}X_{t}^{x}&=I_d+\sum_{n\geq 2}\sum_{\mu_2,...,\mu_n\ge 0}\Big\{\int_{u}^{t}%
\int_{u}^{s_{n-1}}...\int_{u}^{s_{2}}(t-s_{n-1})^{\mu_n
}Db_{\mu_n }(s_{n-1},X_{s_{n-1}}^{x})\cdot ...\cdot\\
& (	s_2-s_1)^{\mu_2}Db_{\mu_2 }(s_1,X_{s_1}^{x})ds_1...ds_{n-1}\Big\}.
\end{align*}%
The latter implies that 
\begin{align*}
\big\vert\big\vert D_{u}X_{t}^{x}\big\vert\big\vert_{L^{2}(\Omega )}
&\le  d^{\frac{1}{2}}+\sum_{n\geq 2}\sum_{\mu_2,...,\mu_n\ge 0}\Big\vert\Big\vert\int_{u}^{t}%
\int_{u}^{s_{n-1}}...\int_{u}^{s_{2}}(t-s_{n-1})^{\mu_n
}Db_{\mu_n }(s_{n-1},X_{s_{n-1}}^{x})\cdot ...\cdot\\
& (	s_2-s_1)^{\mu_2}Db_{\mu_2 }(s_1,X_{s_1}^{x})ds_1...ds_{n-1}\Big\vert\Big\vert_{L^{2}(\Omega )}.
\end{align*}%
Further, it also follows from dominated convergence combined with the Cauchy
formula for repeated integration that%
\begin{eqnarray*}
X_{t}^{x} &=&x+\int_{0}^{t}b(t,s,X_{s}^{x})ds+B_{t} \\
&=&x+\sum_{\mu \geq 0}\int_{0}^{t}(t-s)^{\mu }b_{\mu }(s,X_{s}^{x})ds+B_{t}
\\
&=&x+\sum_{\mu \geq 0}\mu!\int_{0}^{t}\int_{0}^{s_{1}}...\int_{0}^{s_{\mu}}b_{\mu }(s,X_s^{x})ds ds_{\mu }...ds_1+B_{t} \\
&=&x+\int_{0}^{t}C_{s_{1}}(X_{\cdot }^{x})ds_{1}+B_{t}\text{,}
\end{eqnarray*}%
where 
\begin{equation*}
C_{s_{1}}(X_{\cdot }^{x}):=\sum_{\mu \geq 0}\mu!\int_{0}^{s_{1}}...\int_{0}^{s_{\mu}}b_{\mu }(s ,X_s^{x})dsds_{\mu }...ds_{2}\text{.}
\end{equation*}%
Then, using Girsanov's theorem applied to the process $C_{t}(X_{\cdot
}^{x}),0\leq t\leq T$, in combination with the proof of Lemma 3.5 in \cite%
{MMNPZ}, we obtain that 
\begin{align*}
&
\Big\vert\Big\vert\int_{u}^{t}%
\int_{u}^{s_{n-1}}...\int_{u}^{s_{2}}(t-s_{n-1})^{\mu_n
}Db_{\mu_n }(s_{n-1},X_{s_{n-1}}^{x})\cdot ...\cdot (	s_2-s_1)^{\mu_2}Db_{\mu_2 }(s_1,X_{s_1}^{x})ds_1...ds_{n-1}\Big\vert\Big\vert_{L^{2}(\Omega )}\\
& \leq C_{T,b}\cdot
\Big\vert\Big\vert\int_{u}^{t}%
\int_{u}^{s_{n-1}}...\int_{u}^{s_{2}}(t-s_{n-1})^{\mu_n
}Db_{\mu_n }(s_{n-1},B_{s_{n-1}}^{x})\cdot ...\cdot (	s_2-s_1)^{\mu_2}Db_{\mu_2 }(s_1,B_{s_1}^{x})ds_1...ds_{n-1}\Big\vert\Big\vert_{L^{8}(\Omega )},
\end{align*}%
for a constant $C_{T,b}$ depending on $T$ and $\sup_{\mu \geq 0}\left\Vert
b_{\mu }\right\Vert _{\infty }$, where $B_{t}^{x}:=x+B_{t}$. On the other hand, we see from Cauchy`s formula for repeated integration that%
\begin{eqnarray*}
&&\int_{u}^{t}%
\int_{u}^{s_{n-1}}...\int_{u}^{s_{2}}(t-s_{n-1})^{\mu_n
}Db_{\mu_n }(s_{n-1},B_{s_{n-1}}^{x})\cdot ...\cdot (	s_2-s_1)^{\mu_2}Db_{\mu_2 }(s_1,B_{s_1}^{x})ds_1...ds_{n-1}\\
&&=\prod_{i=2}^{n}(\mu_i)!\int_{u}^{t}\int_{u}^{s_{n,\mu_n}}...\int_{u}^{s_{n,1}}...\int_{u}^{s_2}\int_{u}^{s_{2,\mu_2}}...\int_{u}^{s_{2,1}}Db_{\mu_n }(s_{n-1},B_{s_{n-1}}^{x})\cdot ...\cdot Db_{\mu_2 }(s_1,B_{s_1}^{x})\\
&&ds_1...ds_{2,1}...ds_{2,\mu_2}...ds_{n-1}ds_{n,1}...ds_{n,\mu_n}\text{.}
\end{eqnarray*}%
Finally, using integration by parts combined with Proposition \ref{prop1}
we can argue very similarly to the proof of Lemma 3.5 in \cite{MMNPZ} and
obtain the following estimate%

\begin{eqnarray*}
&&\sup_{0\leq u\leq t}\big\vert\big\vert D_{u}X_{t}^{x}\big\vert\big\vert_{L^{2}(\Omega )} \\
&\leq &C_{1}\Big(1+\sum_{n\geq 2}\sum_{\mu_2,...,\mu_n\ge 0}\frac{\prod_{i=2}^{n}(\mu_i)! d^{\sum_{i=2}^n(\mu_i +1)+2}2^{4\sum_{i=2}^n(\mu_i +1)}C^{\sum_{i=2}^n(\mu_i+1)}\prod_{i=2}^n\left\Vert b_{\mu_i
}\right\Vert _{\infty }T^{\frac{1}{2}\sum_{i=2}^n(\mu_i+1)}}{\Gamma (4\sum_{i=2}^n(\mu_i+1)+1)^{1/8}}\Big) \\
&\leq &C_{d,T}(\sup_{\mu \geq 0}\left\Vert b_{\mu }\right\Vert _{\infty })%
\text{,}
\end{eqnarray*}%
where $C_{d,T}$ is a function as stated in the Lemma. Using Stirling`s formula, it follows from our assumptions on $\left\Vert b_{\mu }\right\Vert _{\infty }$, $\mu\ge 0$, that the above double sum converges, provided the time horizon $T$ is chosen small enough. In a very similar way, we also get the second estimate in the Lemma.
\end{proof}

\begin{remark}
The power-series assumption is used for two reasons:
\begin{itemize}[label={-}]
\item it yields a repeated-integration representation (Cauchy formula) that converts Volterra drifts into a form amenable to Malliavin estimates and compactness arguments; and
\item it provides uniform control of iterated integral terms via \eqref{gm},
  which is essential for summability in the derivative bounds.
\end{itemize}
Thus the assumption is tailored to guarantee convergence of the expansion on short time horizons and does not aim to cover all kernels used in applications.
\end{remark}

\bigskip

\section{Weak convergence of approximating solutions}\label{convergence}
The following technical lemma deals with weak convergence of approximating solutions to weak solutions and will be used in the proof of the main result (Theorem \ref{Main}) in Section \ref{4} for establishing a "transformation property" of strong solutions (see \eqref{transform}).

\begin{lemma}\label{WeakConvergence} 
Let $T<1$. Consider the processes $C_{\cdot }$ and $C_{\cdot }^{(n)},n\geq 1$ given by
\begin{equation*}
C_{s_{1}}^{(n)}(B_{\cdot }^{x}):=\sum_{\mu \geq 0}\mu!\int_{0}^{s_{1}}...\int_{0}^{s_{\mu}}b_{\mu }^{(n)}(s ,B_s^{x})dsds_{\mu }...ds_{2}
\end{equation*}%
for vector fields $b_{\mu }^{(n)}\in C_{c}^{\infty }(\left( 0,T\right)
\times \mathbb{R}^{d};\mathbb{R}^{d}),n\geq 1$ and 
\begin{equation*}
C_{s_{1}}(B_{\cdot }^{x}):=\sum_{\mu \geq 0}\mu!\int_{0}^{s_{1}}...\int_{0}^{s_{\mu}}b_{\mu }(s ,B_s^{x})dsds_{\mu }...ds_{2},
\end{equation*}%
where 
\begin{equation*}
b_{\mu }\in L^{\infty }(\left[ 0,T\right] \times \mathbb{R}^{d};\mathbb{R}%
^{d}), \mu\ge 0\text{.}
\end{equation*}%
Assume that%
\begin{equation*}
b_{\mu }^{(n)}(t,y)\underset{n\longrightarrow \infty }{\longrightarrow }%
b_{\mu }(t,y)\text{ }(t,y)-a.e.
\end{equation*}%
as well as 
\begin{equation*}
\sup_{n\geq 1,\mu\ge 0}\big\vert\big\vert b_{\mu }^{(n)}\big\vert\big\vert_{L^{\infty }(\left[ 0,T%
\right] \times \mathbb{R}^{d};\mathbb{R}^{d})}<\infty .
\end{equation*}%
Let $X_{.}^{x,n}$ be the strong solution of the \textup{SVDE} associated with the
$(b_{\mu}^{(n)})_{\mu\ge 0, n\geq 1}$. Suppose that $X_{.}^{x}$ is a weak
solution of the \textup{SVDE} with respect to $(b_{\mu })_{\mu\ge 0}$ on the same probability
space. Then for all bounded continuous functions 
\begin{equation*}
\varphi (X_{t}^{x,n})\underset{n\rightarrow \infty }{\longrightarrow }%
E[\varphi (X_{t}^{x})|\mathcal{F}_{t}]
\end{equation*}%
weakly in $L^{2}(\Omega ;\mathcal{F}_{t})$.
\end{lemma}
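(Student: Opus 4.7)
The strategy is to use Girsanov's theorem to trade each drift functional for the corresponding exponential density on the underlying Wiener space, and then pass to the limit via $L^p$-convergence of these densities. Since each $b_\mu^{(n)}$ is smooth, compactly supported in $(0,T) \times \mathbb{R}^d$ and the family $\{b_\mu^{(n)}\}_{n,\mu}$ is uniformly bounded, Novikov's condition applies to $C_\cdot^{(n)}(x+B_\cdot)$, and a Girsanov computation yields, for any bounded $\mathcal{F}_t$-measurable $\Phi = \Phi(B|_{[0,t]})$,
\begin{equation*}
E\!\left[\varphi(X_t^{x,n})\, \Phi(B)\right]
= E\!\left[\varphi(x+B_t)\, \tilde{\Phi}_n(B)\, Z_T^{(n)}\right],
\end{equation*}
where $\tilde{\Phi}_n(B) := \Phi\!\left(B_\cdot - \int_0^\cdot C_s^{(n)}(x+B_\cdot)\,ds\right)$ and
\begin{equation*}
Z_T^{(n)} := \exp\!\left(\int_0^T C_s^{(n)}(x+B_\cdot)\,dB_s - \tfrac{1}{2}\int_0^T |C_s^{(n)}(x+B_\cdot)|^2\,ds\right).
\end{equation*}
Because $X^x$ is a weak solution on the same probability space, applying Girsanov in the same manner gives the analogous identity for $X^x$ with $C$ and $Z_T$ replacing $C^{(n)}$ and $Z_T^{(n)}$.

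Next, the technical heart is showing $Z_T^{(n)} \to Z_T$ in $L^p(P)$ for every $p \geq 1$. Since $B_s$ admits a Gaussian density for each $s > 0$, a Fubini argument upgrades the a.e.\ convergence $b_\mu^{(n)}(s,y) \to b_\mu(s,y)$ to $b_\mu^{(n)}(s, x+B_s) \to b_\mu(s, x+B_s)$ for $ds \otimes dP$-a.e.\ $(s,\omega)$. The uniform $L^\infty$-bound then permits dominated convergence inside each iterated integral defining $C_s^{(n)}$, while the summability in $\mu$ --- governed by the Stirling-type estimates already employed in Lemma~\ref{Compactness} --- justifies interchanging the limit with the series. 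This delivers $C^{(n)}(x+B_\cdot) \to C(x+B_\cdot)$ in $L^2(ds \otimes dP)$, whence It\^o's isometry gives $L^2$-convergence of the stochastic integral in the exponent of $Z_T^{(n)}$ and dominated convergence gives $L^1$-convergence of the quadratic-variation integral. Uniform $L^q$-integrability of $\{Z_T^{(n)}\}$, following from the uniform bound on $C^{(n)}$ and Novikov, upgrades convergence in probability of the exponentials to convergence in $L^p$.

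Finally, $\{\varphi(X_t^{x,n})\}$ is bounded in $L^\infty$ and therefore weakly relatively compact in $L^2(\Omega;\mathcal{F}_t)$. To identify the weak limit, fix $\Phi \in L^2(\Omega;\mathcal{F}_t)$ and approximate it by bounded continuous cylindrical functionals of $B$. For each such approximation the transformed path $B_\cdot - \int_0^\cdot C^{(n)}_s(x+B)\,ds$ converges to $B_\cdot - \int_0^\cdot C_s(x+B)\,ds$ in probability by the preceding paragraph, so $\tilde\Phi_n(B) \to \tilde\Phi(B)$ in probability and, being uniformly bounded, in $L^2$. Combined with $Z_T^{(n)} \to Z_T$ in $L^2$ and the boundedness of $\varphi$, Cauchy--Schwarz yields convergence of the right-hand side of the Girsanov identity to its analogue built from $(C, Z_T)$, which equals $E[\varphi(X_t^x)\Phi]$ by the Girsanov identity for the weak solution $X^x$. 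Since $E[\varphi(X_t^x)\Phi] = E[\,E[\varphi(X_t^x)\mid\mathcal{F}_t]\,\Phi]$ and $\Phi$ is arbitrary, the claimed weak convergence in $L^2(\Omega;\mathcal{F}_t)$ follows. The principal obstacle is the $L^p$-convergence of the densities: bootstrapping the a.e.\ convergence of the $b_\mu^{(n)}$ to convergence along the random Brownian path, and uniformly controlling the infinite $\mu$-series --- precisely the analytic ingredient that also powers Lemma~\ref{Compactness}; once this is in hand, the remaining identification is routine Girsanov bookkeeping.
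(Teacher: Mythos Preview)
Your proposal is correct and follows essentially the same approach as the paper: use Girsanov to rewrite $E[\varphi(X_t^{x,n})\,\cdot\,]$ as a Wiener functional involving the Dol\'eans--Dade density built from $C^{(n)}(x+B_\cdot)$, establish $L^p$-convergence of these densities from the a.e.\ convergence of $b_\mu^{(n)}$ via dominated convergence, the It\^o isometry, and uniform integrability, and then identify the weak limit. The only cosmetic difference is that the paper tests against the total set of stochastic exponentials $\mathcal{E}(f)=\exp\bigl(\sum_j\int_0^t f_j(s)\,dB_s^j\bigr)$, which absorb the Girsanov drift shift into an explicit exponential factor and thereby spare you the separate tracking of $\tilde\Phi_n\to\tilde\Phi$; your choice of general bounded continuous cylindrical $\Phi$ works equally well but requires that extra (easy) step.
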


\begin{proof}
Using dominated convergence, one shows for all $t$ that
\begin{equation*}
C_{t}^{(n)}(B_{\cdot }^{x})\underset{n\rightarrow \infty }{\longrightarrow }%
C_{t}(B_{\cdot }^{x})\text{ }a.e.
\end{equation*}
Further, it follows from the It\^{o} isometry and uniform integrability that 
\begin{equation*}
\int_{0}^{T}C^{(n)}(s,B_{\cdot }^{x})dB_{s}\underset{n\longrightarrow \infty }{%
\longrightarrow }\int_{0}^{T}C(s,B_{\cdot }^{x})dB_{s}
\end{equation*}%
in $L^{p}(\Omega )$ for all $p\geq 1$. We note that \newline
\newline
$\prod_{t}:=\Big\{\exp
(\sum_{j=1}^{d}\int_{0}^{t}f_{i}(s)dB_{s}^{j}):,f_{j}\in L^{\infty
}([0,t]),j=1,2,..,d\Big\}$ \newline
\newline
is a total subset of $L^{p}(\Omega ;\mathcal{F}_{t})$. So it is sufficient
to show that 
\begin{equation*}
E\left[ \varphi (X_{t}^{x,n})\mathcal{E}(f)\right] \underset{n\rightarrow
\infty }{\longrightarrow }E[E[\varphi (X_{t}^{x})|\mathcal{F}_{t}]\mathcal{E}%
(f)]
\end{equation*}%
for all $\mathcal{E}(f):=\exp
(\sum_{j=1}^{d}\int_{0}^{t}f_{i}(s)dB_{s}^{j})\in \prod_{t}$. Define the
Girsanov change of measures 
\begin{equation*}
dQ_{n}=\mathcal{E}(\int_{0}^{\cdot }\langle
C^{(n)}(s,X_{\cdot }^{x,n}),dB_{s}\rangle )d\mathbb{P},\quad n\geq 1.
\end{equation*}%
Then the Girsanov's theorem implies that 
\begin{equation*}
B_{.}^{\ast ,n}:=B_{t}-\int_{0}^{.}C^{(n)}(s,X_{\cdot }^{x,n})ds
\end{equation*}%
is a $Q_{n}$-Brownian motion and by using the notation $C^{(n)}(s,\cdot )=$ $%
(C^{(n,1)}(s,\cdot ),...,C^{(n,d)}(s,\cdot ))^{\text{T}}$ ($\text{T}$ transpose) we
get   
\begin{align*}
& E\Big[\varphi (X_{t}^{x,n})\exp \Big\{\sum_{j=1}^{d}%
\int_{0}^{t}f_{j}(s)dB_{s}^{j}\Big\}\Big] \\
& =E_{Q_{n}}\Big[\varphi (x+B_{t}^{\ast ,n})\exp \Big\{\sum_{j=1}^{d}%
\int_{0}^{t}f_{j}(s)dB_{s}^{\ast ,n,j} \\
& +\sum_{j=1}^{d}\int_{0}^{t}f_{j}(s)C^{(n,j)}(s,x+B_{\cdot }^{\ast ,n})ds\Big\}%
\times \mathcal{E}(\int_{0}^{\cdot }\langle C^{(n)}(s,x+B_{\cdot }^{\ast
,n}),dB_{s}^{\ast ,n}\rangle) \Big] \\
& =E\Big[\varphi (x+B_{t})\exp \Big\{\sum_{j=1}^{d}%
\int_{0}^{t}f_{j}(s)dB_{s}^{j}+\sum_{j=1}^{d}%
\int_{0}^{t}f_{j}(s)C^{(n,j)}(s,x+B_{\cdot })ds\Big\} \\
& \times \mathcal{E}(\int_{0}^{\cdot }\langle
C^{(n)}(s,x+B_{\cdot }),dB_{s}\rangle )\Big].
\end{align*}%
We observe that
\begin{align*}
& E\Big[E\Big[\varphi (X_{t}^{x})\Big\vert\mathcal{F}_{t}\Big]\exp \Big\{%
\sum_{j=1}^{d}\int_{0}^{t}f_{j}(s)dB_{s}^{j}\Big\}\Big] \\
& =E\Big[E\Big[\varphi (X_{t}^{x})\exp \Big\{\sum_{j=1}^{d}%
\int_{0}^{t}f_{j}(s)dB_{s}^{j}\Big\}\Big\vert\mathcal{F}_{t}\Big]\Big] \\
& =E\Big[\varphi (X_{t}^{x})\exp \Big\{\sum_{j=1}^{d}%
\int_{0}^{t}f_{j}(s)dB_{s}^{j}\Big\}\Big].
\end{align*}%
Using the inequality $|e^{x}-e^{y}|\leq e^{x+y}|x-y|$ for all $x,y$, our
assumptions and the H\"{o}lder inequality, we derive that 
\begin{align*}
& \Big\vert E\Big[\varphi (X_{t}^{x,n})\exp \Big\{\sum_{j=1}^{d}%
\int_{0}^{t}f_{j}(s)dB_{s}^{j}\Big\}\Big]-E\Big[\varphi (X_{t}^{x})\exp %
\Big\{\sum_{j=1}^{d}\int_{0}^{t}f_{j}(s)dB_{s}^{j}\Big\}\Big]\Big\vert \\
& \leq \Big\vert E\Big[\varphi (x+B_{t})\exp \Big\{\sum_{j=1}^{d}%
\int_{0}^{t}f_{j}(s)dB_{s}^{j}+\sum_{j=1}^{d}%
\int_{0}^{u}f_{j}(s)C^{(n,j)}(s,x+B_{\cdot })ds\Big\} \\
& \times \mathcal{E}(\int_{0}^{\cdot }\langle
C^{(n)}(s,x+B_{\cdot }),dB_{s}\rangle) \Big] \\
& -E\Big[\varphi (x+B_{t})\exp \Big\{\sum_{j=1}^{d}%
\int_{0}^{t}f_{j}(s)dB_{s}^{j}+\sum_{j=1}^{d}%
\int_{0}^{t}f_{j}(s)C^{(j)}(s,x+B_{\cdot })ds\Big\} \\
& \times \mathcal{E}(\int_{0}^{\cdot }\langle C(s,x+B_{\cdot }),dB_{s}\rangle) %
\Big] \Big\vert\\
& \leq K_{t}(f)J_{1}^{n}J_{2}^{n},
\end{align*}%
where 
\begin{equation*}
K_{t}(f):=E\Big[\varphi (x+B_{t})^{2}\exp \Big\{2\sum_{j=1}^{d}%
\int_{0}^{t}f_{j}(s)dB_{s}^{j}\Big\}\Big]^{1/2},
\end{equation*}

\begin{align*}
J_{1}^{n}& :=E\Big[\exp \Big\{4\sum_{j=1}^{d}%
\int_{0}^{t}f_{j}(s)(C^{(n,j)}(s,x+B_{\cdot })+C^{(j)}(s,x+B_{\cdot }))ds\Big\} \\
& \times \exp \Big\{4\int_{0}^{T}\langle
C^{(n)}(s,x+B_{\cdot })+C(s,x+B_{\cdot }),dB_{s}\rangle \Big\} \\
& \times \exp \Big\{-2\int_{0}^{T}\Big(%
\big\vert\big\vert C^{(n)}(s,x+B_{\cdot })\big\vert\big\vert^{2}+\big\vert\big\vert C(s,x+B_{\cdot })\big\vert\big\vert^{2}\Big)ds\Big\}\Big]%
^{1/4}
\end{align*}%
\begin{align*}
J_{2}^{n}& :=E\Big[\Big\vert\sum_{j=1}^{d}%
\int_{0}^{t}f_{j}(s)C^{(j)}(s,x+B_{\cdot })ds+\sum_{j=1}^{d}%
\int_{0}^{T}C^{(j)}(s,x+B_{\cdot })dB_{s}^{j} \\
& -\frac{1}{2}\int_{0}^{T}\big\vert\big\vert C(s,x+B_{\cdot })\big\vert\big\vert^{2}ds-\sum_{j=1}^{d}%
\int_{0}^{t}f_{j}(s)C^{(n,j)}(s,x+B_{\cdot })ds \\
& -\sum_{j=1}^{d}\int_{0}^{T}C^{(n,j)}(s,x+B_{\cdot })dB_{s}^{j}+\frac{1}{2}%
\int_{0}^{T}\Big(\big\vert\big\vert C^{(n)}(s,x+B_{\cdot})\big\vert\big\vert^{2}ds\Big\vert^{2}\Big]^{1/4}.
\end{align*}%
Then it follows from the martingale property of the Doleans-Dade exponential
and our assumptions on the coefficients that $\sup_{n\geq 1}J_{1}^{n}<\infty 
$. Further, using once again dominated convergence, we have that $J_{2}^{n}%
\underset{n\rightarrow \infty }{\longrightarrow }0$, which yields the result.
\end{proof}

\begin{remark}\label{Identity}
\begin{enumerate}[label=(\roman*)]
\item The proof of the above auxiliary result shows that the function $\varphi $ can also be chosen to be the identity map. 
\item Using mollification (with a non-negative mollifier), one can choose a sequence $b^{(k)}$, $k\ge 1$, of finite sums of the type \eqref{b} such that the conditions of Lemma \ref{Compactness} and Lemma \ref{WeakConvergence} are satisfied.
\end{enumerate}
\end{remark}

\section{Existence of a unique weak solution and proof of the main result}\label{existence1}\label{4}
\begin{lemma}\label{existence}
Let $B_{t},0\leq t\leq T$ be a $d-$dimensional Brownian motion on a
probability space $(\Omega ,A,\mathbb{P})$. Suppose $b\in \mathbb{H}$. Then there exists a weak solution $X_{.}$ to \textup{SVDE} \eqref{SDE0},
which is unique in law.
\end{lemma}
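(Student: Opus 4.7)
The plan is to use a Girsanov change of measure to construct a weak solution on the same probability space and then to deduce uniqueness in law by running Girsanov backwards. As in the proof of Lemma~\ref{Compactness}, Cauchy's formula for repeated integration allows us to rewrite the Volterra drift as an adapted path-dependent drift: for any continuous path $y_{\cdot}$,
\begin{equation*}
\int_{0}^{t} b(t,s,y_{s})\, ds \;=\; \int_{0}^{t} C_{s_{1}}(y_{\cdot})\, ds_{1},
\end{equation*}
where $C_{s_{1}}(y_{\cdot})=\sum_{\mu\geq 0}(\mu-1)!\int_{0}^{s_{1}}\!\!\!\cdots\int_{0}^{s_{\mu-1}} b_{\mu}(s_{\mu},y_{s_{\mu}})\,ds_{\mu}\cdots ds_{2}$. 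Thus the SVDE \eqref{SDE0} is equivalent to the (now classical) path-dependent SDE $X_{t}=x+\int_{0}^{t}C_{s}(X_{\cdot})\,ds+\tilde B_{t}$.

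For weak existence, I would put $B_{\cdot}^{x}:=x+B_{\cdot}$ and, using the hypothesis $E[\mathcal{E}(\int_{0}^{\cdot}\langle C_{s}(B_{\cdot}^{x}),dB_{s}\rangle)_{T}]=1$, define a probability measure $\mathbb{Q}$ on $(\Omega,A)$ by
\begin{equation*}
\frac{d\mathbb{Q}}{d\mathbb{P}} \;=\; \mathcal{E}\!\left(\int_{0}^{\cdot}\langle C_{s}(B_{\cdot}^{x}),dB_{s}\rangle\right)_{T}.
\end{equation*}
By Girsanov's theorem, the process $\tilde B_{t}:=B_{t}-\int_{0}^{t}C_{s}(B_{\cdot}^{x})\,ds$ is a $d$-dimensional $\mathbb{Q}$-Brownian motion. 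Setting $X_{t}:=B_{t}^{x}$, we then have $X_{t}=x+\int_{0}^{t}C_{s}(X_{\cdot})\,ds+\tilde B_{t}$, and applying the Cauchy identity above in reverse yields $X_{t}=x+\int_{0}^{t}b(t,s,X_{s})\,ds+\tilde B_{t}$. Hence $(\Omega,A,\mathbb{Q},(\mathcal{F}_{t}),\tilde B,X)$ is a weak solution of \eqref{SDE0}.

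For uniqueness in law, let $(Y,W)$ on some $(\Omega',A',\mathbb{P}',(\mathcal{G}_{t}))$ be any weak solution of \eqref{SDE0}. The Cauchy-formula rewriting gives $Y_{t}=x+\int_{0}^{t}C_{s}(Y_{\cdot})\,ds+W_{t}$. Because $b$, hence $C$, is bounded on $[0,T]$ (the hypothesis $\sum_{m\geq 0}T^{m}\|g_{m}\|_{\infty}<\infty$ ensures this), Novikov's condition is satisfied for the exponential $Z_{t}=\mathcal{E}(-\int_{0}^{\cdot}\langle C_{s}(Y_{\cdot}),dW_{s}\rangle)_{t}$, so $d\mathbb{P}'':=Z_{T}\,d\mathbb{P}'$ defines a probability measure under which, by Girsanov, $W_{t}+\int_{0}^{t}C_{s}(Y_{\cdot})\,ds=Y_{t}-x$ is a $d$-dimensional Brownian motion. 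Therefore, for every bounded measurable functional $F$ on path space,
\begin{equation*}
E_{\mathbb{P}'}[F(Y_{\cdot})] \;=\; E_{\mathbb{P}''}\!\left[F(Y_{\cdot})\,Z_{T}^{-1}\right] \;=\; E^{W}\!\left[F(x+W_{\cdot})\,\mathcal{E}\!\left(\int_{0}^{\cdot}\langle C_{s}(x+W_{\cdot}),dW_{s}\rangle\right)_{T}\right],
\end{equation*}
where $E^{W}$ denotes expectation under Wiener measure. The right-hand side depends only on $b$ and $x$, so the law of $Y$ is uniquely determined.

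The main technical obstacle is to justify carefully that the adapted reformulation via Cauchy's formula is valid under the stated summability assumption on $(g_{m})$ and that, in the uniqueness step, the reverse Girsanov density $Z_{T}$ is a genuine martingale for an arbitrary weak solution $Y$; here the boundedness of $C$ on $[0,T]$ (inherited from $\sum_{m}T^{m}\|g_{m}\|_{\infty}<\infty$) plus Novikov's criterion does the job, while the given hypothesis covers the forward Girsanov direction needed for existence.
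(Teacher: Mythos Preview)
Your proposal is correct and follows essentially the same route as the paper: forward Girsanov (using the hypothesis $E[\mathcal{E}(\ldots)_{T}]=1$) to exhibit $X_{t}:=B_{t}^{x}$ as a weak solution under the new measure, and reverse Girsanov applied to an arbitrary weak solution to show that its law is a fixed functional of Wiener measure. The paper's proof is terser---it does not spell out the Cauchy-formula rewriting or explicitly invoke Novikov for the reverse density---but the strategy and the key steps coincide.
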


\begin{proof}
Define the probability measure 
\begin{equation}
dQ=\mathcal{E}(\int_{0}^{\cdot }\langle C_{s}(B_{\cdot }^{x}),dB_{s}\rangle
)_{T}d\mathbb{P}.  \label{weak}
\end{equation}%
Then by the Girsanov theorem, we have 
\begin{equation*}
B_{t}^{\ast }=B_{t}-\int_{0}^{t}C_{s}(B_{\cdot }^{x})dB_{s},0\leq t\leq T
\end{equation*}%
as a $Q-$Wiener process. Then a weak solution is obtained by choosing the
process $X_{t}:=B_t^x$. To prove uniqueness in law of weak solutions to 
\textup{SVDE} \eqref{SDE0}, let $X_{.}$ and $Y_{.}$ be two weak solutions
with respect to the measures $\mathbb{P}$ and $\mathbb{P}^{\ast }$ and
Brownian motions $B_{\cdot }^{X}$ and $B_{\cdot }^{Y}$, repectively. Then by
means of Girsanov`s theorem and the replacement of $B_{\cdot }^{X}$ and $%
B_{\cdot }^{Y}$ by $B_{\cdot }$ under $\mathbb{P}$ in connection with
measurable functionals on the Wiener space, we get for all Borel sets $A$ in 
$\mathbb{R}^{n}$ that 
\begin{equation*}
\mathbb{P}((X_{t_{1}},...,X_{t_{n}})\in A)=E_{\mathbb{P}}\Big[%
1_{\{(B_{t_{1}}^x,...,B_{t_{n}}^x)\in A\}}\mathcal{E}(\int_{0}^{\cdot }\langle
C_{s}(B_{\cdot }^{x}),dB_{s}\rangle )_{T}\Big].
\end{equation*}%
as well as 
\begin{equation*}
\mathbb{P}^{\ast }((Y_{t_{1}},...,Y_{t_{n}})\in A)=E_{\mathbb{P}}\Big[%
1_{\{(B_{t_{1}}^x,...,B_{t_{n}}^x)\in A\}}\mathcal{E}(\int_{0}^{\cdot }\langle
C_{s}(B_{\cdot }^{x}),dB_{s}\rangle )_{T}\Big].
\end{equation*}%
So $X_{.}$ and $Y_{.}$ coincide in law.
\end{proof}

We are now coming to the proof of our main result.

\begin{proof}[Proof of Theorem \protect\ref{Main}]
\bigskip
Let $X_{\cdot }^{x,n}$ be the unique strong solution to
the SVDE \eqref{SDE0} associated with coefficients $b_{\mu }^{(n)},\mu \geq
0,n\geq 1$ converging to $b_{\mu }\in L^{\infty }(\left[ 0,T\right] \times 
\mathbb{R}^{d};\mathbb{R}^{d})$ for $n\longrightarrow \infty $ for all $\mu $
under the assumptions of Lemma \ref{WeakConvergence}, Lemma \ref{Compactness}
and     
\begin{equation*}
\sup_{n\geq 1,\mu \geq 0}\big\vert\big\vert b_{\mu }^{(n)}\big\vert\big\vert_{\infty
}<\infty \text{.}
\end{equation*}%
Then it follows from Lemma \ref{Compactness} in connection with Corollary \ref%
{MC} in the Appendix (applied to $b_{\mu }^{(n)},\mu \geq 0,n\geq 1$) that
for all $0\leq t\leq T$ there exists a subsequence $n_{k},k\geq 1$
(depending on $t$) such that%
\begin{equation*}
X_{t}^{x,n_{k}}\underset{k\longrightarrow \infty }{\longrightarrow }Y_{t}^{x}
\end{equation*}%
in $L^{2}(\Omega ;\mathcal{F}_{t})$. On the other hand, by using the theorem
of Mitoma, we can see just as in \cite{MMNPZ} that%
\begin{equation*}
X_{\cdot }^{x,n}\underset{n\longrightarrow \infty }{\longrightarrow }%
Y_{\cdot }^{x}
\end{equation*}%
in $C(\left[ 0,T\right] ;\left( S\right) ^{\ast })$, where $\left( S\right)
^{\ast }$ is the Hida-distribution space with respect to the ($d-$%
dimensional) Brownian motion $B_{\cdot }$. The latter entails that 
\begin{equation*}
X_{t}^{x,n}\underset{n\longrightarrow \infty }{\longrightarrow }Y_{t}^{x}
\end{equation*}%
in $L^{2}(\Omega ;\mathcal{F}_{t})$ for all $t$. Further, taking into
account Remark \ref{Identity}, we find that $Y_{t}^{x}$ coincides with $%
E[X_{t}^{x}|\mathcal{F}_{t}]$ a.e. for all $t$ for the weak solution $%
X_{t}^{x}$ to the SVDE \eqref{SDE0}. So for a bounded continuous function $%
\varphi $ we obtain from Lemma \ref{WeakConvergence} the following relation%
\begin{equation}\label{transform}
\varphi (E[X_{t}^{x}|\mathcal{F}_{t}])=E[\varphi (X_{t}^{x})|\mathcal{F}_{t}]%
\text{ }a.e.
\end{equation}%
for all $t$. Then approximation of $\varphi (x)=(x^{2},...,x^{2})$ by bounded continuous
functions combined with dominated convergence shows the validity of the
latter relation in this case, which implies%
\begin{equation*}
E[X_{t}^{x}|\mathcal{F}_{t}]=X_{t}^{x}\text{,}
\end{equation*}%
that is adaptedness of $X_{\cdot }^{x}$. So $X_{\cdot }^{x}$ is a strong
solution on the probability space on which the weak solution lives.
Uniqueness of strong solutions follows from the fact that the $S-$transform
of two solutions are the same (see \cite{MMNPZ}). So we get a unique strong
solution on the probability space on which the (a priori) weak solution
exists. The constructed strong solution can then be represented as a
progressively measurable functional of the Brownian motion on the latter
probability space. In order to obtain strong solutions on an arbitrary
probability space, one can replace that Brownian motion with the one on the
other probability space in the measurable functional.

Let us now show the ($\mathbb{P}-$a.e.) local Sobolev differentiability of $%
(x\longmapsto X_{t}^{x})$ . To this end, consider once more the above
approximating sequence of solutions $X_{\cdot }^{x,n},n\geq 1$. Using
dominated convergence, one gets similarly to the case of the Malliavin
derivative of a solution the linear equation
\begin{equation}
\frac{d}{dx}X_{t}^{x,n}=\int_{0}^{t}Db^{(n)}(t,s,X_{s}^{x,n})\frac{d}{dx}%
X_{s}^{x,n}ds+I_{d}\text{.}
\end{equation}%
We can then apply the same arguments as in the proof of Lemma \ref%
{Compactness} in connection with Picard iteration of the latter equation and
derive the analogous estimate%
\begin{equation*}
\sup_{0\leq t\leq T}\Big\vert\Big\vert\frac{d}{dx}X_{t}^{x,n}\Big\vert\Big\vert_{L^{2}(\Omega ;\mathbb{R}%
^{d\times d})}\leq C_{d,T}(\sup_{n\geq 1,\mu \geq 0}\big\vert\big\vert b_{\mu
}^{(n)}\big\vert\big\vert _{\infty })<\infty 
\end{equation*}%
for a non-decreasing continuous function $C_{d,T}:\left[ 0,\infty \right)
\longrightarrow $ $\left[ 0,\infty \right) $. Using the latter estimate, one
concludes (just as in \cite{MMNPZ}) that $(x\longmapsto X_{t}^{x})$ is in $%
L^{2}(W^{1,2}(U))$ for all bounded and open sets $U\subset \mathbb{R}^{d}$.
\end{proof}

\begin{example}\label{exam1}
\bigskip Consider the \textup{SVDE} 
\begin{equation}\label{Sine}
X_{t}^{x}=x+\int_{0}^{t}\sin (t-s)g(s,X_{s}^{x})ds+B_{t}\text{, } 
\end{equation}%
$0\leq t\leq T$, where $g\in L^{\infty }(\left[ 0,T\right] \times \mathbb{R}^{d};\mathbb{R}^{d})$. The drift vector field belongs to the class $\mathbb{H}$. So according to Theorem \ref{Main} the \textup{SVDE} (\ref{Sine}) posesses unique strong solution which is smooth in the sense of Malliavin differentiability and in the sense of ($\mathbb{P}-$a.e.) local Sobolev differentiability. Let us also mention here that the kernel $\sin(t-s)$ admits a finite-dimensional Markovian lift. More precisely, introducing suitable auxiliary processes, the dynamics can be expressed as
\begin{align*}
\begin{cases}
dX_t = Y_t \, dt + dW_t, \\
dY_t = \bigl(b(t,X_t) - Z_t\bigr)\, dt, \\
dZ_t = Y_t \, dt,
\end{cases}
\end{align*}
with initial conditions $(X_0,Y_0,Z_0) = (x,0,0)$.
\end{example}

\begin{example}\label{exam2}
Assume the SVDE%
\[
X_{t}=x+\int_{0}^{t}J_{0}(t-s)g(s,X_{s})ds+B_{t},0\leq t\leq T,
\]%
where $g\in L^{\infty }(\left[ 0,T\right] \times \mathbb{R};\mathbb{R})$ and 
$J_{0}$ denotes the Bessel function of the first kind of order zero, defined
by%
\[
J_{0}(u)=\sum_{m\geq 0}\frac{(-1)^{m}}{(m!)^{2}}(\frac{u}{2})^{2m}.
\]%
The kernel $K=J_{0}$ is entire and admits a power series expansion with
super-factorially decaying coefficients. Hence, the function $b$ given by 
\[
b(t,s,x)=J_{0}(t-s)g(s,x)
\]%
belongs to the class $\mathbb{H}$, and all assumptions of Theorem \ref{Main} are satisfied. In contrast to the case $K(t)=\sin (t)$ in the
previous example, this equation does not allow for a finite-dimensional
Markovian lift of the form%
\[
\begin{array}{c}
dX_{t}=c^{T}Y_{t}dt+dB_{t}, \\ 
dY_{t}=AY_{t}+bg(t,X_{t})dt,Y_{0}=0%
\end{array}%
\]%
for $A\in \mathbb{R}^{n\times n}$ and $b,c\in \mathbb{R}^{n}$. Indeed, if
such a representation were possible, then by variation of constants we would
obtain%
\[
Y_{t}=\int_{0}^{t}e^{(t-s)A}bg(s,X_{s})ds\text{,}
\]%
and hence%
\[
dX_{t}=\left( \int_{0}^{t}c^{T}e^{(t-s)A}bg(s,X_{s})ds\right) dt+dB_{t}.
\]%
Using Fubini`s theorem, this yields a Volterra equation with kernel%
\[
\widetilde{K}(u)=\int_{0}^{u}c^{T}e^{vA}bdv,u\geq 0\text{,}
\]%
and therefore its derivative is $\widetilde{K}^{\shortmid }(v)=c^{T}e^{vA}b$%
. If $\widetilde{K}(v)=J_{0}(v)$, then necessarily%
\[
\widetilde{K}^{\shortmid }(v)=J_{0}^{\shortmid }(v)=-J_{1}(v)\text{,}
\]%
where $J_{1}$ denotes the Bessel function of the first kind of order one.
However, any function of the form $u\longmapsto c^{T}e^{vA}b$ belongs to the
class of exponential-polynomial functions and solves a linear ODE with
constant coefficients. However, $J_{1}$ does not belong to this class and
solves the Bessel equation%
\[
u^{2}f^{\shortmid \shortmid }+uf^{\shortmid }+(u^{2}-1)f=0
\]%
with variable coefficients. This shows that there is no finite-dimensional
Markovian lift of the form above for $K=$ $J_{0}$.
\end{example}

\bigskip 

\begin{remark}
In the convolution-type case of $b$ with%
\[
b(t,s,x)=K(t-s)g(s,x),0\leq s\leq t\leq T,
\]%
where $g\in L^{\infty }(\left[ 0,T\right] \times \mathbb{R}^{d};\mathbb{R}%
^{d})$, the condition $b\in \mathbb{H}$ reduces to a condition on the kernel 
$K$. In particular, if $K$ admits a power series expansion%
\[
K(u)=\sum_{m\geq 0}a_{m}u^{m},0\leq u\leq T
\]%
with coefficients $a_{m},m\geq 0$ satisfying the growth condition \eqref{gm}, then $b\in \mathbb{H}$. This shows that, in the convolution-type
setting, the class $\mathbb{H}$ contains kernels with sufficiently
well-controlled power series expansions.
\end{remark}

\begin{remark}
Using the approach of this paper one may also study the \textup{SVDE} of the type%
\begin{equation*}
X_{t}^{x}=x+\int_{0}^{t}(t-s)^{\alpha }g(s,X_{s}^{x})ds+B_{t}\text{, }
\end{equation*}%
$0\leq t\leq T$, for $g\in L^{\infty }(\left[ 0,T\right] \times \mathbb{R}^{d};\mathbb{R}^{d})
$, where $-\frac{1}{2}<\alpha <0$. The reason for this is that we can write%
\begin{equation}\label{represent}
(t-s)^{\alpha }=\sum_{n\geq 0}\binom{\alpha }{n}(t-s-1)^{n}=\sum_{n\geq
0}\sum_{k=0}^{n}\binom{\alpha }{n}\binom{n}{k}(-1)^{n-k}(t-s)^{k}\text{, }%
0<t-s<2\text{,}
\end{equation}%
which allows the application of the Cauchy formula for repeated integration
in connection with similar proofs of Lemma \ref{Compactness}, \ref{WeakConvergence} and \ref{existence}, when $T$ is chosen to be sufficiently small. The latter, however, requires further analysis and modification of the proofs of Lemma \ref{Compactness} and Lemma \ref{WeakConvergence} based on the representation \eqref{represent}, which is beyond the scope of the current paper.
\end{remark}

\section*{Appendix}
The result below provides a compactness criterion for subsets of $L^2(\mu;\mathbb{R}^d)$ using the Malliavin calculus (see \cite{Prato}).
\begin{theorem}\label{the0}
Let $\{(\Omega,\mathcal{A},\mathbb{P});H\}$ be a Gaussian space, that is $(\Omega,\mathcal{A},\mathbb{P})$ is a probability space and $H$ a separable closed subspace of Gaussian random variables of $L^2(\Omega)$, which generate the $\sigma$-field $\mathcal{A}$. Denote by D the derivative operator acting on elementary smooth function variables in the sense that 
\begin{equation*}
 D(f(h_1,...,h_n))=\sum_{i=1}^n\delta_if(h_1,...,h_n)h_i,\quad h_i\in H, f\in\mathcal{C}_b^{\infty}(\mathbb{R}^n).
\end{equation*}
Further, let $D_{1,2}$ be the closure of the family of elementary smooth random variables with respect to the norm
\begin{equation*}
\big\vert\big\vert F \big\vert\big\vert_{1,2}:=\big\vert\big\vert F \big\vert\big\vert_{L^2(\Omega)}+\big\vert\big\vert DF \vert\vert_{L^2(\Omega;H)}.
\end{equation*}
Assume that $\mathcal{C}$ is a self-adjoint compact operator on $H$ with dense image. Then for any $c>0$, the set
\begin{equation*}
\mathcal{G}=\Big\{G\in D_{1,2}: \big\vert\big\vert G \big\vert\big\vert_{L^2(\Omega)}+\big\vert\big\vert \mathcal{C}^{-1} DG \big\vert\big\vert_{L^2(\Omega;H)}\le c \Big\}
\end{equation*}
is relatively compact in $L^2(\Omega)$.
\end{theorem}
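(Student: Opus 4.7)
My plan is to use the Wiener chaos decomposition of $L^2(\Omega)$, namely $G = \sum_{n \ge 0} J_n G$, where $J_n$ is the orthogonal projection onto the $n$-th Wiener chaos $\mathcal{H}_n$; via the multiple Wiener-It\^o integrals $I_n$ one has the canonical isometry $\mathcal{H}_n \cong H^{\odot n}$ with $\|I_n(f_n)\|_{L^2(\Omega)}^2 = n!\,\|f_n\|_{H^{\otimes n}}^2$. The crucial identity is that on each chaos the Malliavin derivative acts by lowering the order: $\|D(J_n G)\|_{L^2(\Omega;H)}^2 = n\,\|J_n G\|_{L^2(\Omega)}^2$. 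I would reduce the claimed relative compactness of $\mathcal{G}$ in $L^2(\Omega)$ to two assertions: (a) the high-chaos tails of $\mathcal{G}$ shrink uniformly, and (b) for each fixed $n$, the set $\{J_n G : G \in \mathcal{G}\}$ is relatively compact in $\mathcal{H}_n$.

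For (a), since $\mathcal{C}$ is compact and hence bounded, I have $\|DG\|_{L^2(\Omega;H)} \le \|\mathcal{C}\|_{\mathrm{op}}\,\|\mathcal{C}^{-1} DG\|_{L^2(\Omega;H)} \le c\,\|\mathcal{C}\|_{\mathrm{op}}$ uniformly over $G \in \mathcal{G}$. The chaos identity then yields $\sum_{n > N} \|J_n G\|_{L^2(\Omega)}^2 \le (c\,\|\mathcal{C}\|_{\mathrm{op}})^2 / N$, which tends to $0$ uniformly. For (b), I would diagonalize $\mathcal{C}$ against an orthonormal basis $\{e_k\}$ of $H$ with strictly positive eigenvalues $\lambda_k \downarrow 0$ (positivity because $\mathcal{C}$ has dense image, decay because $\mathcal{C}$ is compact). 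For $G_n = I_n(f_n) \in \mathcal{H}_n$ the directional derivatives satisfy $\langle DG_n, e_k\rangle = n\,I_{n-1}(f_n(\cdot,e_k))$, and after unfolding and exploiting the symmetry of the kernel I arrive at
\begin{equation*}
\|\mathcal{C}^{-1} D G_n\|_{L^2(\Omega;H)}^2 \;=\; n!\sum_{k_1,\dots,k_n}\Big(\sum_{j=1}^n \lambda_{k_j}^{-2}\Big)\,|f_n(k_1,\dots,k_n)|^2.
\end{equation*}
Since $\lambda_k^{-2} \to \infty$, this weighted summability forces the mass of $f_n$ outside any finite cube $\{1,\dots,K\}^n$ to vanish as $K \to \infty$, uniformly over $G \in \mathcal{G}$. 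Hence $\{f_n : G \in \mathcal{G}\}$ is relatively compact in $H^{\odot n}$, and therefore $\{J_n G : G \in \mathcal{G}\}$ is relatively compact in $\mathcal{H}_n$.

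Combining (a) and (b) via a standard diagonal subsequence extraction, any sequence in $\mathcal{G}$ admits a subsequence along which each chaos component converges in $L^2(\Omega)$ and whose high-chaos tails are uniformly small; the corresponding series converges in $L^2(\Omega)$, giving relative compactness of $\mathcal{G}$. The main obstacle is step (b): correctly unfolding $D$ along the eigenbasis of $\mathcal{C}$ and keeping track of the factorials and symmetrizations in order to obtain the weighted $\ell^2$ identity above. Once that identity is secured, the passage to $H^{\odot n}$-compactness is purely a matter of dominated convergence in a Hilbert space weighted by a compact multiplier.
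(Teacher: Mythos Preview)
The paper does not actually prove this theorem: it is stated in the Appendix as a known result, with attribution to Da~Prato--Malliavin--Nualart (the reference \cite{Prato}/\cite{DMN}). So there is no ``paper's own proof'' to compare against.

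That said, your proposal is correct and is essentially the argument given in the original Da~Prato--Malliavin--Nualart paper: decompose into Wiener chaoses, use the identity $\|D(J_nG)\|_{L^2(\Omega;H)}^2 = n\,\|J_nG\|_{L^2(\Omega)}^2$ together with the boundedness of $\mathcal{C}$ to get uniform decay of the high-order tails, and on each fixed chaos $\mathcal{H}_n\cong H^{\odot n}$ use the spectral decomposition of $\mathcal{C}$ to show that the bound on $\|\mathcal{C}^{-1}DG\|$ forces the Fourier coefficients of the kernels $f_n$ to be uniformly small outside finite cubes, yielding relative compactness chaos by chaos. Your weighted identity for $\|\mathcal{C}^{-1}DG_n\|^2$ is correct (by symmetry of $f_n$, the single factor $n\lambda_{k_n}^{-2}$ averages to $\sum_{j}\lambda_{k_j}^{-2}$). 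One cosmetic point: self-adjointness plus dense image gives \emph{nonzero} eigenvalues, not necessarily positive ones, so you should write $|\lambda_k|\to 0$ rather than $\lambda_k\downarrow 0$; this does not affect any step of the argument.
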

In order to formulate compactness criteria useful for our purposes, we need the following technical result which also can be found in \cite{Prato}.
\begin{lemma}\label{the1}
Let $v_s$, $s\ge 0$, be the Haar basis of $L^2([0,1])$. For any $0<\alpha<1/2$ define the operator $A_{\alpha}$ on $L^2([0,1])$ by
\begin{equation*}
A_{\alpha}v_s=2^{k\alpha}v_s,\quad \text{if}\quad s=2^k+j
\end{equation*}
for $k\ge0$, $0\le j\le 2^k$ and
\begin{equation*}
A_{\alpha}1=1.
\end{equation*}
Then for $\beta$ with $\alpha<\beta<(1/2)$, there exists a constant $c_1$ such that
\begin{equation*}
\big\vert\big\vert A_{\alpha}f\big\vert\big\vert\le c_1\Bigg\{\big\vert\big\vert F \big\vert\big\vert_{L^2([0,1])}+\Bigg(\int_0^1\int_0^1\frac{\vert f(t)-f(t')\vert^2}{\vert t-t' \vert } dtdt'\Bigg)^{1/2}  \Bigg\}.
\end{equation*}
\end{lemma}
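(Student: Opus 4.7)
The plan is to exploit the explicit structure of the Haar orthonormal basis and reduce the estimate on $\|A_\alpha f\|$ to an elementary Parseval--type computation on Haar coefficients.

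First, I would recall the explicit form: for $s=2^k+j$ with $k\ge 0$ and $0\le j<2^k$, $v_s$ is supported on the dyadic interval $I_{k,j}=[j\,2^{-k},(j+1)\,2^{-k}]$, equal to $+2^{k/2}$ on its left half $L_{k,j}$ and $-2^{k/2}$ on its right half $R_{k,j}$. Hence the Haar coefficient is a rescaled difference of averages,
\begin{equation*}
c_{k,j}:=\langle f,v_{2^k+j}\rangle = 2^{-k/2-1}\bigl(\bar f_{L_{k,j}}-\bar f_{R_{k,j}}\bigr),
\end{equation*}
and Jensen's inequality gives
\begin{equation*}
\bigl|\bar f_{L_{k,j}}-\bar f_{R_{k,j}}\bigr|^{2}\le \frac{1}{|L_{k,j}||R_{k,j}|}\int_{L_{k,j}}\!\!\int_{R_{k,j}}|f(t)-f(t')|^{2}\,dt\,dt',
\end{equation*}
so that $|c_{k,j}|^{2}\le 2^{k}\int_{L_{k,j}}\!\int_{R_{k,j}}|f(t)-f(t')|^{2}\,dt\,dt'$.

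Second, by Parseval's identity and the definition of $A_\alpha$,
\begin{equation*}
\|A_\alpha f\|_{L^{2}}^{2}=|\langle f,\mathbf{1}\rangle|^{2}+\sum_{k\ge 0}\sum_{j=0}^{2^{k}-1}2^{2k\alpha}|c_{k,j}|^{2}\le \|f\|_{L^{2}}^{2}+\sum_{k,j}2^{k(1+2\alpha)}\int_{L_{k,j}}\!\!\int_{R_{k,j}}|f(t)-f(t')|^{2}\,dt\,dt'.
\end{equation*}
Since on $L_{k,j}\times R_{k,j}$ one has $|t-t'|\le 2^{-k}$, the prefactor $2^{k(1+2\alpha)}$ is dominated by $|t-t'|^{-(1+2\alpha)}$; and the dyadic rectangles $L_{k,j}\times R_{k,j}$ are pairwise disjoint, as a direct induction on $k$ shows (each $L_{k+1,j'}\times R_{k+1,j'}$ lives strictly inside either $L_{k,j}\times L_{k,j}$ or $R_{k,j}\times R_{k,j}$). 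Consequently,
\begin{equation*}
\sum_{k,j}2^{k(1+2\alpha)}\int_{L_{k,j}}\!\!\int_{R_{k,j}}|f(t)-f(t')|^{2}\,dt\,dt'\le \int_{0}^{1}\!\!\int_{0}^{1}\frac{|f(t)-f(t')|^{2}}{|t-t'|^{1+2\alpha}}\,dt\,dt',
\end{equation*}
and the stated inequality follows after taking square roots (the parameter $\beta\in(\alpha,1/2)$ enters as slack which lets one absorb the geometric constants that arise when converting the exponent; any $\beta$ strictly larger than $\alpha$ suffices).

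The only genuine obstacle is the dyadic bookkeeping in the last step, namely verifying that the rectangles $L_{k,j}\times R_{k,j}$ are (essentially) pairwise disjoint so that their sum is dominated by the full double integral over $[0,1]^{2}$; everything else is Parseval together with Jensen. The condition $\alpha<1/2$ is exactly what makes the Gagliardo--type integral finite for a sufficiently regular $f$, and it is also what makes the series $\sum_{k}2^{k(1-2(1/2-\alpha))}$ converge after pairing the dyadic estimate with the pointwise bound $|t-t'|\le 2^{-k}$.
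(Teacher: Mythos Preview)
The paper does not prove this lemma at all: it is quoted from Da~Prato--Malliavin--Nualart \cite{Prato} and stated without argument, so there is no ``paper's own proof'' to compare against. Your proposal therefore supplies what the paper omits, and the argument you give is essentially the standard direct proof: express $\|A_\alpha f\|^2$ via Parseval, bound each Haar coefficient by a local oscillation using Jensen, and then absorb the resulting dyadic sum into the Gagliardo double integral by exploiting the pairwise disjointness of the rectangles $L_{k,j}\times R_{k,j}$. The computation of $c_{k,j}$, the Jensen step, and the disjointness induction are all correct as written.

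Two small points. First, the statement as printed in the paper is missing an exponent: the denominator should be $|t-t'|^{1+2\beta}$, not $|t-t'|$, since otherwise $\beta$ never appears in the inequality. Your argument in fact produces the sharper bound with exponent $1+2\alpha$, and the version with $\beta>\alpha$ then follows trivially because $|t-t'|\le 1$ on $[0,1]^2$; you might say this explicitly rather than describing $\beta$ as ``slack to absorb geometric constants'', which is vaguer than the argument warrants. Second, your closing sentence about the series $\sum_k 2^{k(1-2(1/2-\alpha))}$ is confused and unnecessary: nothing in your proof requires a geometric series to converge, since the disjointness of the rectangles already makes the sum a sub-integral of the full Gagliardo integral. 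The constraint $\alpha<1/2$ plays no role in the inequality itself; it only matters so that $A_\alpha^{-1}$ is compact in the application (Theorem~\ref{the0}). You can simply delete that sentence.
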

A direct consequence of Theorem \ref{the0} and Lemma \ref{the1} is the following compactness criteria:
\begin{corollary}
\label{MC}
 Let a sequence of $\mathcal{F}_1$-measurable random variables $X_n\in \mathbb{D}_{1,2}$, $n=1,2,...,$ be such that there exist constants $\alpha>0$ and $C>0$ with   
\begin{equation*}
\sup_n E\big[\big\vert\big\vert X_n \big\vert\big\vert^2\Big]\le C,
\end{equation*}
\begin{equation*}
\sup_n E\Big[\Big\vert\Big\vert D_tX_n-D_{t'}X_n \Big\vert\Big\vert^2\Big]\le C\vert t-t^{'} \vert^{\alpha},
\end{equation*}
for $0\le t'\le t\le 1$ and 
 \begin{equation*}
\sup_n\sup_{0\le t\le 1} E\Big[\big\vert\big\vert D_tX_n\big\vert\big\vert^2\Big]\le C.
\end{equation*}
Then the sequence $X_n$, $n=1,2,...$ is relatively compact in $L^2(\Omega)$.
\end{corollary}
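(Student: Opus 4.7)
The plan is to derive Corollary~\ref{MC} as a direct application of the abstract compactness Theorem~\ref{the0} with the Gaussian Hilbert space $H = L^2([0,1])$ (applied componentwise in the vector case) and the compact self-adjoint operator taken to be $\mathcal{C} := A_\gamma^{-1}$ for an appropriately chosen $\gamma \in (0,1/2)$, where $A_\gamma$ is the Haar-basis operator introduced in Lemma~\ref{the1}.

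First I would check that $\mathcal{C}$ satisfies the structural hypotheses of Theorem~\ref{the0}. Because $A_\gamma$ is diagonal in the Haar basis with eigenvalues $1$ on the constant function and $2^{k\gamma}$ on $v_{2^k+j}$, its inverse $\mathcal{C}$ is diagonal in the same basis with strictly positive eigenvalues $1$ and $2^{-k\gamma}$ that decay to zero as $k \to \infty$. Hence $\mathcal{C}$ is self-adjoint, compact, and has dense range in $H$, so the abstract framework of Theorem~\ref{the0} applies.

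The crux is then to bound $\|\mathcal{C}^{-1} DX_n\|_{L^2(\Omega;H)} = \|A_\gamma DX_n\|_{L^2(\Omega;H)}$ uniformly in $n$. For each $\omega$, Lemma~\ref{the1} applied to the section $t \mapsto D_t X_n(\omega)$ (reading the fractional-Sobolev seminorm with its standard Besov--Slobodeckij denominator $|t-t'|^{1+2\beta}$) gives, for any admissible $\beta \in (\gamma, 1/2)$,
\begin{equation*}
\|A_\gamma D_\cdot X_n(\omega)\|_H^2 \le 2 c_1^2 \left( \int_0^1 \|D_t X_n(\omega)\|^2 \, dt + \int_0^1 \int_0^1 \frac{\|D_t X_n(\omega) - D_{t'} X_n(\omega)\|^2}{|t-t'|^{1+2\beta}} \, dt \, dt' \right).
\end{equation*}
Taking expectation, applying Fubini, and invoking the hypotheses $\sup_{n,t} E[\|D_t X_n\|^2] \le C$ and $E[\|D_t X_n - D_{t'} X_n\|^2] \le C |t-t'|^\alpha$, the right-hand side is majorized by a constant times
\begin{equation*}
C + C \int_0^1 \int_0^1 |t-t'|^{\alpha - 1 - 2\beta} \, dt \, dt',
\end{equation*}
which is finite provided $\beta < \alpha/2$. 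Choosing first $\beta \in (0, \min(\alpha/2, 1/2))$ and then $\gamma \in (0, \beta)$, and combining this with the hypothesis $\sup_n E[\|X_n\|^2] \le C$, places the family $\{X_n\}$ inside a set of the form $\mathcal{G}$ considered in Theorem~\ref{the0} for some $c > 0$.

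Relative compactness of $\{X_n\}$ in $L^2(\Omega)$ then follows immediately from Theorem~\ref{the0}. The main technical point to handle carefully is the correct coupling of the two exponents $\gamma$ (in the Haar operator) and $\beta$ (in the fractional seminorm) so that the pathwise Lemma~\ref{the1} is applicable and the resulting deterministic integral converges; this is precisely where the hypothesis $\alpha > 0$ of the corollary is used in an essential way. Everything else — self-adjointness and compactness of $\mathcal{C}$, density of its range, and the Fubini interchange — is a routine verification.
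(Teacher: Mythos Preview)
Your proposal is correct and follows precisely the route the paper indicates: the paper gives no explicit proof but states the corollary as ``a direct consequence of Theorem~\ref{the0} and Lemma~\ref{the1}'', and your argument supplies exactly those details by taking $\mathcal{C}=A_\gamma^{-1}$ and choosing $\gamma<\beta<\min(\alpha/2,1/2)$ so that the fractional seminorm integral converges. Your parenthetical correction of the denominator in Lemma~\ref{the1} to $|t-t'|^{1+2\beta}$ is also the intended reading (the $\beta$ would otherwise play no role), consistent with the original result in \cite{Prato}.
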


To prove the main result (Theorem \ref{Main}), we also need the following proposition (see
Prop. 3.7 in \cite{MMNPZ}):

\begin{proposition}\label{prop1}
Let $f_{i}:\left[ 0,T\right] \times \mathbb{R}^{d}\longrightarrow \mathbb{R}%
,i=1,...,n$ be compactly supported smooth functions. Further, let $\alpha
_{i}=(a_{i,j})_{0\leq j\leq d}\in \left\{ 0,1\right\} ^{d}$ be a multiindex
with $\left\vert \alpha _{i}\right\vert :=\sum_{j=1}^{d}a_{i,j}=1$ for all $%
i=1,...,d$. Then there exists a constant $C$ depending on $d$ (but not on $n,
$ $f_{i},i=1,...,n$ and $\alpha _{i},i=1,...,d$) such that%
\begin{eqnarray}
&&\left\vert E\left[ \int_{t_{0}<t_{1}<...<t_{n}<t}\left(
\dprod\limits_{i=1}^{n}D^{\alpha _{i}}f_{i}(t_{i},B_{t_{i}})\right)
dt_{1}...dt_{n}\right] \right\vert   \label{Estimate} \\
&\leq &\frac{C^{n}\dprod \limits_{i=1}^{n}\left\Vert f_{i}\right\Vert
_{\infty }(t-t_{0})^{n/2}}{\Gamma (\frac{n}{2}+1)},  \notag
\end{eqnarray}
where $\Gamma $ is the Gamma function and where $D^{\alpha _{i}}$ denotes
the partial derivative with respect to the multiindex $\alpha _{i}$.
\end{proposition}

\end{document}